\def\dsp{\displaystyle}
\newenvironment{proof}{{\noindent \textbf{Proof}\,\,}}{\hspace*{\fill}$\Box$\medskip}
\newtheorem{Def}{Definition}
\newtheorem{Thm}{Theorem}
\newtheorem{Lem}{Lemma}
\newtheorem{Rem}{Remark}
\newtheorem{Prop}{Proposition}
\def\zz{\mathbb{Z}}
\def\cc{\mathbb{C}}
\def\rr{\mathbb{R}}
\def\F{\mathcal F}
\def\G{\mathcal G}
\def\A{\mathcal A}
\def\e {\varepsilon }
\def\ph {\varphi }
\def\diffeo {diffeomorphism }
\def\homeo {homeomorphism }
\def\diffeos {diffeomorphisms }
\def\skpr {skew product }
\def\skprs {skew products }
\def\st {such that }
\def\nbd {neighborhood }
\def\om {\omega }
\def\a {\alpha }
\def\skpr {skew product }
\def\l {\lambda }
\def\d {\delta }
\def\L {\Lambda }
\def\om {\omega}
\def\s {\sigma }
\def\Ast {A_{stat}}
\def\a{\alpha}
\def\p {\partial }
\def\ga {\gamma}
\def\t{\tilde}
\def\zz{\mathbb{Z}}
\def\cc{\mathbb{C}}
\def\rr{\mathbb{R}}
\def\F{\mathcal F}
\def\G{\mathcal G}
\def\L{\mathcal L}
\def\e {\varepsilon }
\def\ph {\varphi }
\def\a {\alpha}
\def\t {\tilde }
\def\ga {\gamma }
\def\diffeo {diffeomorphism }
\def\homeo {homeomorphism }
\def\diffeos {diffeomorphisms }
\def\skpr {skew product }
\def\skprs {skew products }
\def\st {such that}
\def\nbd {neighborhood }
\def\om {\omega }
\def\skpr {skew product }
\def\l {\lambda }
\def\L {\Lambda }
\def\st{such that }
\def\mlip{\mbox { Lip }}
\def\nn{\mathbb N}
\def\Am {A_{max}}
\def\ns{North-South skew product }
\def\nss{North-South skew products }
\title{Invisible parts of attractors\footnote{\textbf{AMS Classification}: 35B41, \textbf{Keywords}: Milnor attractors, statistical attractors, invisible sets, hyperbolic invariant set}}
\author{Yu. Ilyashenko\thanks{ The author was supported by part by the grants
NSF 0700973, RFBR 07-01-00017-à, RFFI-CNRS 050102801} \thanks
{Cornell University, US; Moscow State and Independent Universities,
Steklov Math. Institute, Moscow} and A. Negut \thanks{Princeton
University, US; Independent University of Moscow, Russia }}
\date{}
\begin{document}

\maketitle

\begin{abstract}
This paper deals with the attractors of generic dynamical systems. We introduce the notion of $\e-$invisible set, which is an open set in which almost all orbits spend on average a fraction of time no greater than $\e$. For extraordinarily small values of $\e$ (say, smaller than $2^{-100}$), these are areas of the phase space which an observer virtually never sees when following a generic orbit.

We construct an open set in the space of all dynamical systems which have an $\e-$invisible set that includes parts of attractors of size comparable to the entire attractor of the system, for extraordinarily small values of $\e$. The open set consists of $C^1$ perturbations of a particular skew product over the Smale-Williams solenoid. Thus for all such perturbations, a sizable portion of the attractor is almost never visited by generic orbits and practically never seen by the observer.
\end{abstract}

\section{Introduction}

One of the major problems in the theory of dynamical systems is the
study of the limit behavior of orbits. Most orbits tend to
invariant sets called attractors. Knowledge of the attractors may
indicate the long time behavior of the orbits.

Yet it appears that some large parts of attractors may be
practically invisible. In this paper we describe an open set in the
space of dynamical systems whose attractors have a large
unobservable part. Precise definitions follow.

\subsection{Attractors and $\e$-invisible open sets}

There are different nonequivalent definitions of attractors.

Let  $X$ be a \emph{metric measure space}, with a finite measure
$\mu $. This measure will not necessarily be probabilistic, but we
will assume that $\mu (x) \ge 1$.

Often, but not always, $X$ will be  a compact smooth manifold with
or without boundary. In this case the metric is the geodesic
distance and the measure is the Riemannian volume. In the following,
$d$ and $\mu$ will denote the distance and the measure on $X$. The
following definitions all concern maps $F:X\rightarrow X$ which are
homeomorphic onto their image.

\begin{Def} [Maximal attractor]  \label{def:amax}
An invariant set $A_{max}$ of $F$ is called \emph{a maximal
attractor in its neighborhood} provided that there exists a \nbd $U$
of $\Am $ \st
$$
A_{max} = \bigcap_{n=0}^\infty F^n(U).
$$
\end{Def}

\begin{Def} [Milnor attractor, \cite{M85}] The \emph{Milnor attractor} $A_M$ of $F$ is the minimal invariant
closed set that contains the $\om$-limit sets of almost all points.
\end{Def}

\begin{Def} [Statistical attractor, \cite{AAIS85}]   \label{def:astat}
The \emph{statistical attractor} $A_{stat}$ of $F$ is the minimal
closed set \st almost all orbits spend an average time of $1$ in any
\nbd of $\Ast$.
\end{Def}

\begin{Def} [$\e-$invisible open set] \label{def:e-invis}
An open set $V \subset X$ is called $\e-$invisible if the orbits
of almost all points visit $V$ with average frequency no grater than
$\e$:

\begin{equation}\label{eqn:strong}
\limsup_{N\to \infty } \frac {|\{0\leq k< N | F^k(x)\in
V\}|}N\leq \e.
\end{equation}
\end{Def}

\subsection{Skew products}

In this section, $X$ is a Cartesian product $X = B \times M$
with the natural projection $ \pi : X \to M $ along $B$. The set $B$
is the base, while $M$ is the fiber. Both $B$ and $M$ are metric measure spaces. The distance between two points of $X$ is,
by definition, the sum of the distances between their projections onto
the base and onto the fiber. The measure on $X$ is the
Cartesian product of the measures of the base and of the fiber.

Maps of the form
\begin{equation}   \label{eqn:smooth1}
F : ( b ,x) \mapsto (h(b),f_b(x))
\end{equation}
are called \emph{skew products} on $X$. Denote by $C^1_p$ ($p$ stands for
product) the space of all skew products on $X$, with distance given
by

\begin{equation}
\label{eqn:distskew} d_{C^1_p}(F ,\t F ) = \max_B d_{C^1}(f_b^{\pm
1}, \t f_b^{\pm 1}).
\end{equation}
\begin{Def} \label{def:moder} A \homeo $F$ of a metric space is
called $L-$moderate if $\mlip F^{\pm 1} \le L$ (here $\mlip$ denotes
Lipschitz constant).
\end{Def}

We shall consider only $L-$moderate maps $F$ with $L \le 100$, in
order to guarantee that the phenomenon of $\e-$invisibility is not
produced by any extraordinary distortion in the maps $F$ or
$F^{-1}$.

\subsection{Skew products over the Smale-Williams solenoid and the main result} \label{sub:solenoid}

Take $R\geq 2$, and let $B=B(R)$ denote the solid torus
$$
B = S^1_{\bold y} \times D(R), \ S^1_{\bold y} = \{ y \in \rr /\zz
\}, \ D(R) = \{z \in \mathbb C| |z| \le R \}.
$$
The \emph{solenoid map} is defined  as
\begin{equation}
h=h_{\lambda}: B \to B, (y, z) \mapsto (2y, e^{2\pi i y} + \lambda
z), \lambda < 0.1. \label{eqn:sol}
\end{equation}
The exact values of the parameters $R$ and $\lambda$ are not
crucial, since the dynamics of the map $h$ is the same regardless of
their particular values.

Let us consider the Cartesian product $X=B\times S^1$, where
$S^1=\rr/2\zz$. All skew products in this section are over this
Cartesian product, and the map $h$ in the base $B$ will always be the
solenoid map. Fix some $L\leq 100$, and let $D_L(X)$ denote the
space of $L-$moderate smooth maps $\G:X\rightarrow X$. Our main result on attractors is the
following Theorem.

\begin{Thm} [Main Theorem 1] \label{thm:mt} Consider any $n \ge 100$, and let $\nu=\frac 1n$. Then there exists
a ball $Q_n$ in the space $D_L(X)$ with the following property. Any
map $\G \in Q_n$ is structurally stable and has a statistical
attractor $A_{stat}=A_{stat}(\G)$ such that the following hold:

\begin{enumerate}

\item the projection $\pi(A_{stat})\subset S^1$ is a circular arc such that
\begin{equation}    \label{eqn:proj1}
[\nu, 1 - \nu] \subset \pi(A_{stat}) \subset [-\nu, 1 + \nu];
\end{equation}
\item the set $\pi^{-1}(0, \frac 1 4)$ is $\e-$invisible for $\G$ with
\begin{equation}    \label{eqn:invis}
 \e = 2^{-n}.
\end{equation}
\end{enumerate}
\end{Thm}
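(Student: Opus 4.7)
The plan is to exhibit a single skew product $F_0=(h,f_b)$ realising both conclusions of the theorem and then to let $Q_n$ be a sufficiently small $C^1$-ball around $F_0$ in $D_L(X)$, propagating the properties by uniform hyperbolicity of the base together with normal hyperbolicity of the fiber dynamics. Thus there are three tasks: build $F_0$, verify the statistical conclusions for $F_0$, and prove their persistence under $C^1$-perturbation.

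To build $F_0$, I would let the fiber map depend essentially on the symbolic coding of $b$ under $h$: partition $y\in S^1_{\bfy}$ into halves $I_0=[0,\tfrac12)$ and $I_1=[\tfrac12,1)$ and set $f_b=f_\kappa$ for $y\in I_\kappa$, smoothly interpolated in a thin collar across the boundary. Both $f_0,f_1:S^1\to S^1$ are chosen as North--South \diffeos with attracting fixed points in $[\tfrac12,\tfrac34]$, well away from the invisible arc $(0,\tfrac14)$. A specific \fw $\sigma^*\in\{0,1\}^n$ is fixed and the parameters of $f_0,f_1$ are tuned so that every length-$n$ composition $f_{\sigma_{n-1}}\cdots f_{\sigma_0}$ with $\sigma\ne\sigma^*$ preserves $[\tfrac14,1]$, whereas the single critical composition maps a sub-arc into $(0,\tfrac14)$, after which one further iteration returns the excursion to $[\tfrac14,1]$. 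The Lipschitz cap $\mlip f_b^{\pm1}\le L\le 100$ is respected because this net displacement is produced by the accumulation of many small contractions, not by the strength of any single fiber map. To verify (2), I use that the SRB measure of the Smale--Williams solenoid pushes forward to the Bernoulli $(\tfrac12,\tfrac12)$ measure under the symbolic coding, so Birkhoff's theorem gives asymptotic frequency of $\sigma^*$ in the orbit of almost every $b$ equal to $2^{-n}$; since each occurrence triggers one immediate excursion to $\pi^{-1}(0,\tfrac14)$ and no other visits occur, the $\e$-invisibility with $\e=2^{-n}$ follows. Conclusion (1) follows because the images $f_{\sigma_{n-1}}\cdots f_{\sigma_0}([\tfrac14,1])$ sweep densely across $[\tfrac14,1-\nu]$ as $\sigma$ varies, while the critical excursions sweep $[\nu,\tfrac14]$ as their shift position ranges along a typical orbit.

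For the perturbation step, the Smale--Williams solenoid is uniformly hyperbolic and therefore structurally stable, so every $\G$ sufficiently $C^1$-close to $F_0$ in $D_L(X)$ carries a hyperbolic invariant set conjugate to the base solenoid. The fiber attracting and repelling fixed-point sections of $F_0$ are normally hyperbolic and persist as continuous invariant graphs $x=\a_\G(b)$, $x=\rh_\G(b)$ over the perturbed base via the \grtr applied to $\G$. The asymptotic visitation frequency of an open set (the $\limsup$ in \eqref{eqn:strong}) is upper semicontinuous under $C^1$-perturbations of the map, so the bound $\e\le 2^{-n}$ persists on a sufficiently small ball $Q_n$, as do the projection inclusions \eqref{eqn:proj1} and the required structural stability.

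The main obstacle will be the quantitative compatibility of conclusions (1) and (2). Condition (2) forces visits to $\pi^{-1}(0,\tfrac14)$ to be exponentially rare in $n$, while (1) simultaneously requires the statistical attractor to project densely onto $[\nu,\tfrac14]$, which sits entirely inside the invisible arc. The critical-word excursions must therefore spread the orbit across a macroscopic chord of the invisible zone while contributing total frequency only $2^{-n}$. Arranging this within the fixed Lipschitz budget $L\le 100$ precludes brute-force amplification and forces a delicate iterated-contraction gadget whose bookkeeping must then survive $C^1$-perturbation without losing the sharp exponential rate.
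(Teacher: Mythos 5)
Your overall architecture (explicit model map, statistical verification via the symbolic coding of the solenoid, then persistence under $C^1$-perturbation) matches the paper's, but the two load-bearing mechanisms you propose are different from the paper's and both have genuine gaps. First, the invisibility mechanism. You place both fiber attractors in $[\tfrac12,\tfrac34]$, ``well away'' from $(0,\tfrac14)$, and posit a single critical word $\sigma^*$ of length $n$ whose composition dives into the invisible arc while all $2^n-1$ other length-$n$ compositions preserve $[\tfrac14,1]$. This gadget is only asserted, and it collides head-on with conclusion (1): for the attractor to be a continuous invariant graph the fiber maps must contract uniformly on the relevant region, so every length-$n$ composition maps the attractor's projection into an interval of length $O(\lambda^n)$; hence all critical excursions land in an exponentially small neighborhood of a single point and cannot ``sweep $[\nu,\tfrac14]$'' as you claim. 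The tension you flag in your last paragraph is real and your construction does not resolve it. The paper resolves it by a completely different mechanism: $f_0$ has its attractor at $0$, i.e.\ at the boundary of the invisible arc, and contracts extremely slowly ($f_0'|I\equiv 1-\tfrac{\nu}{2}$), while $f_1$ has its attractor at $1$ and pushes every point of $I$ above $\tfrac34$. Entering $(0,\tfrac14)$ at time $k$ then forces the last $n$ symbols to be $0\dots 0$ (since after any $1$, at most $n-1$ applications of $f_0$ bring the point down only to $(1-\tfrac1n)^{n-1}(\tfrac34-\nu)+\nu>\tfrac14$), and the word $0^n$ has Bernoulli frequency $2^{-n}$. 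Conclusion (1) then comes for free from connectedness: $A_{stat}$ is homeomorphic to the solenoid, hence $\pi(A_{stat})$ is a connected arc inside $I$ containing both the fixed point at $\pi=0$ and a period-$2$ point at $\pi=a>1-\nu$, so it contains $[\nu,1-\nu]$. The attractor thus dips all the way to $0$ geometrically while carrying SRB-measure at most $2^{-n}$ there.

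Second, the perturbation step. Your assertion that the $\limsup$ visit frequency of an open set is upper semicontinuous under $C^1$-perturbation is false in general; statistical quantities of this kind can jump under arbitrarily small perturbations of non-hyperbolic systems, and even in the hyperbolic setting one needs an argument. The paper's route is: prove the non-wandering set splits into a hyperbolic attractor $A$ and a hyperbolic repelling set $S$ (cone conditions, checked in a rescaled metric exploiting the block-triangular Jacobian); use Bowen's theorem to show $W^sS$ has measure zero; identify $A_{stat}$ with $A^*_{max}$ and construct the SRB measure as the pullback of the Bernoulli measure; translate $\e$-invisibility into the statement $\mu_\infty(V)\le\e$ (Proposition~\ref{prop:invissrb}); and finally invoke Ruelle's theorem on differentiability of SRB measures together with Gorodetski's theorem $A^*_{max}(\G)=A_{stat}(\G)$ to propagate both conclusions to a $C^1$-ball. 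Without the SRB/hyperbolicity machinery your persistence claim is unsupported, and without the connectedness argument (or the paper's slow-contraction design) your conclusion (1) fails outright.
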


\begin{Rem} We do not make any quantitative statements about the size
of the ball $Q_n$ in $D_L(X)$. However, if we restrict attention to the
smaller space $C^1_{p,L}$ of $L-$moderate skew products over the solenoid,
then the conclusion of the above Theorem holds for a ball of radius
$\frac 1{4n^2}$ inside $C^1_{p,L}$. This is proven in an analogous
fashion to Theorem~\ref{thm:mtskew} below. Moreover, one can combine
Theorem~\ref{thm:mt} and Theorem~\ref{thm:mtskew} in order to obtain
the conclusion of Theorem~\ref{thm:mt} with $Q_n$ replaced by the
neighborhood in $D_L(X)$ of a ball of radius $\frac 1{4n^2}$ in
 $C^1_{p,L}$. For the sake of conciseness, we will avoid the
  technical details that produce this stronger result.
\end{Rem}

\begin{Rem} It is easy to construct a map with a large $\e-$invisible
part of its attractor and with distortion of order
$\e^{-1}$ (so with an enormous Lipschitz constant). Indeed, consider
an irrational rotation $R$ of a circle. The statistical attractor of
$R$ is the whole circle. Take a small arc of length $\e $ and a
coordinate change $H: S^1 \to S^1$ that expands this arc to a
semicircle $U$. Suppose that on the other half of the circle, the
Lipshitz constant of the inverse map $H^{-1}$ is no greater that
$3$. Then all the orbits of the map $f = H\circ R \circ H^{-1}$
visit the semicircle $U$ with frequency $\e $. Hence, this large
part of attractor is $\e-$invisible. However, the map $f$ has a
Lipshitz constant of order $\e^{-1}$. We reject such examples,
because they rely on extraordinarily large distorsions  to produce
$\e$-invisible sets, for extraordinarily small $\e$.

On the contrary, in Theorem~\ref{thm:mt} we construct maps on a
``human'' scale that produce $\e-$invisible sets, for
extraordinarily small $\e$. Indeed, our main theorem claims the
existence of  large $\e-$invisible sets with $\e $ arbitrarily small,
when the Lipschitz constant of the maps in question is uniformly
bounded (say, by $L = 100$).
\end{Rem}

\begin{Rem} Consider the bifurcation of a homoclinic orbit of a
saddle-node singular point in a family of planar vector fields $v_\e
$. For the critical parameter value $\e = 0$ the field has a
polycycle $\ga $ formed by the point and its homoclinic orbit, see
Fig~\ref{fig:bifsad}.

\begin{figure}[ht]
 \begin{center}
  \includegraphics[scale=0.5]{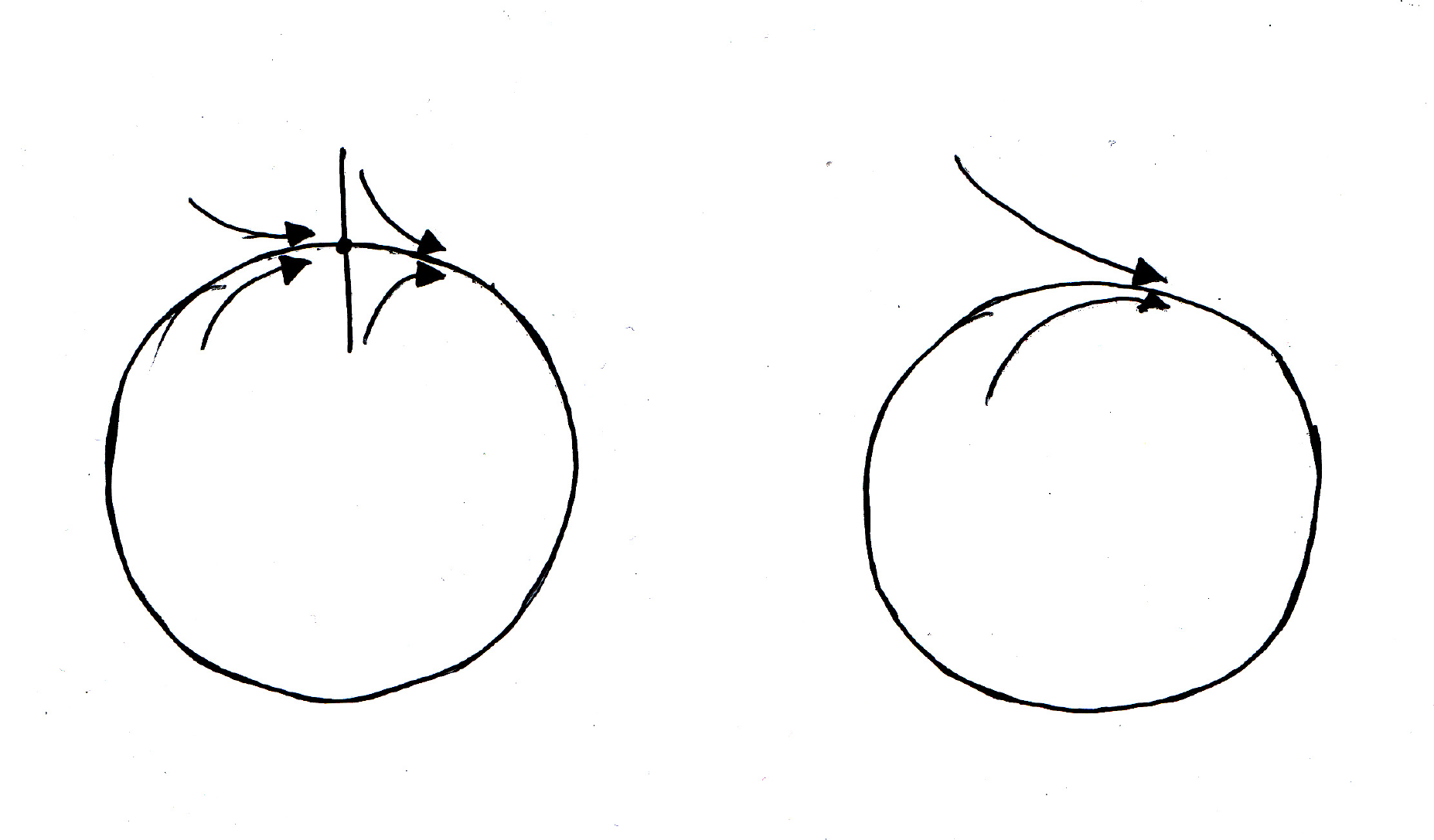}
   \caption{Bifurcation of a saddle-node orbit}
   \label{fig:bifsad}
  \end{center}
\end{figure}
  
For the postcritical parameter values, the field has an attracting
periodic orbit close to $\ga $. The finitely smooth normal form of
the family $v_\e $ near the saddle-node is:
$$
\dot x =\frac {x^2+\e }{1+ax}, \ \dot y = -y,
$$
for some $a \in \rr $, see  \cite{IYa91}. For any $\delta $, the
time spent by the orbits with initial condition on $\{ 0\} \times
[-\frac {\delta }{2}, \frac {\delta }{2}]$ in the \nbd ${[-\delta
,\delta ]}^2$ of zero is of order $\e^{\frac 1 2}$. Consider the
time one phase flow transformation $F_\e $ of the field $v_\e $. For
maps $F_\e $ with parameter values $\e \in (0, \e^2_0]$, the
shadowed area in Fig~\ref{fig:bifsad} is $\e_1-$invisible with
$\e_1$ close to $\e_0$. So, if $\e_1$ is extraordinarily small and
the parameter $\e $ is of order $\e^2_1$, we meet again the effect
of $\e-$invisibility of a large part of the attractor of the map
$F_\e $. But the parameter in the data is extraordinarily small
itself.
\end{Rem}

In our Main Theorem, the maps with large $\e-$invisible parts of
their attractor have a moderate Lipschitz constant $L \le 100$. We believe that the open set of such maps contains a ball of
radius $n^{-b}$ (for some universal constant $b \le 4$) in the whole
space $D_L(X)$. To prove this, one should replace the qualitative
arguments of Section~\ref{sec:pert} by quantitative ones.

Theorem~\ref{thm:mt} considers maps defined by ``human-scale
parameters of order $n$''. Formally speaking, this means that these
maps are at distance at least $n^{-a}$ from structurally unstable
maps: in $C^1_p$ this is proved for $a = 2$; in $D_L(X)$ we
conjecture it for $a = 3$. On the other hand, these maps have large
$\e-$invisible parts of attractors for $\e  = 2^{-n}$. In
Theorem 1, this part is equal to $A_{stat} \cap V$, and its size is
comparable with the size of the whole attractor. More precisely, the
projection $\pi (A_{stat} \cap V)$ is an arc of about $\frac 14 $ of the total length
of the arc $\pi (A_{stat})$. Roughly speaking, to visualize this part of the
attractor, the observer would have to pursue orbits for time intervals of
order $2^n$. Even for $n = 100$, it is hard to imagine such an
experiment.

\section{Invisible parts of attractors for skew products}

Skew products may be called \emph{miniUniverses of Dynamical
Systems}. Many properties observed for these products appear to
persist as properties of diffeomorphisms for open sets in various
spaces of dynamical systems. This heuristic principle was justified
in \cite{GI99}, \cite{GI00}, \cite{G06}.  In this context an open
set of \diffeos with nonhyperbolic invariant measures was found in
\cite{GIKN05} and \cite{KN07}, while other new robust properties of
\diffeos were described in \cite{GI99} and \cite{GI00}. The present
paper is another application of this heuristic principle.

In this section we define and study \skprs over the Bernoulli shift,
which closely mimic the dynamics of skew products over the
solenoid.

\subsection{Step and mild skew products over the Bernoulli shift}
 \label{sub:stepmild}

Let $\Sigma^2$ be the space of all bi-infinite sequences of $0$ and
$1$, endowed with the standard metric $d$ and $(\frac 1 2, \frac 1
2)-$probability Bernoulli measure $P$. In other words, if we take
$\om,\om'\in \Sigma^2$ given by
$$
\om = \dots \om_{-n} \dots \om_0 \dots \om_n \dots
$$
$$
\om' = \dots \om'_{-n} \dots \om'_0 \dots \om'_n \dots ,
$$
then
\begin{equation}\label{eqn:dist} d(\omega , \omega' ) = 2^{-n}
\mbox { where } n = \min \{ |k|, \textrm{ such that }\omega_k \ne
\omega'_k\},
\end{equation}
\begin{equation}\label{eqn:meas}
P(\{\omega, \textrm{ such that } \omega_{i_1}=\alpha_1, ...,
\omega_{i_k}=\alpha_k\}) = \dsp \frac 1{2^k},
\end{equation}
for any $i_1,...,i_k \in \zz$ and any $\alpha_1,...,\alpha_k\in
\{0,1\}$.

Let $\sigma :\Sigma^2 \to \Sigma^2 $  be the Bernoulli shift
$$
\sigma : \omega \mapsto \omega' , \ \omega'_n = \omega_{n+1}.
$$
A \emph{skew product over the Bernoulli shift} is a map
\begin{equation}   \label{eqn:mild}
G: \Sigma^2 \times M \to \Sigma^2 \times M, \ (\om ,x)\mapsto (\s
\om, g_{\om}(x)),
\end{equation}
where the \textit{fiber maps} $g_\omega$ are \diffeos of the fiber
onto itself. An important  class of skew products over the Bernoulli
shift consists of the so called \emph{step skew products}. Given two
diffeomorphisms $g_0,g_1:S^1\rightarrow S^1$, the step skew product
over these two diffeomorphisms is
\begin{equation}   \label{eqn:step}
G: \Sigma^2 \times M \to \Sigma^2 \times M, \ (\om ,x)\mapsto (\s
\om, g_{\om_0}(x)).
\end{equation}
Thus the fiber maps $g_\om$ only depend on the digit $\om_0$, and
not on the whole sequence $\om$. In contrast to step skew products,
general skew products where the fiber maps depend on the whole
sequence $\omega$ will be called \emph{mild} ones.


\subsection{SRB measures and minimal attractors}\label{sub:srb}

Consider a metric measure space $X$. We begin with the definition of
the (global) maximal attractor, which is only slightly different
from Definition~\ref{def:amax}. Let $\G:X\rightarrow X$ be
homeomorphic onto its image, but suppose its image is contained
strictly in $X$. The \emph{(global) maximal attractor} of $\G$ is
defined as:
\begin{equation}    \label{eqn:glob}
A_{max} = \bigcap_{k=0}^\infty \G^k(X)
\end{equation}

Moreover, a measure $\mu_{\infty}$ is called a \emph{good measure}
of $\G$ (with respect to the measure $\mu$ of $X$) if
$$
\mu_\infty = \lim_{k\rightarrow \infty} \frac 1k\sum_{i=0}^{k-1}
\G^i_* \mu,
$$
in the weak topology, see \cite{GI96}.

The closure of the union of supports of all good measures of $\G$ is
called the \emph{minimal attractor}, and it is contained in the
statistical attractor (also see \cite{GI96}). Thus the following
inclusions between attractors hold:

\begin{equation} \label{eqn:incl3}
A_{min} \subset  A_{stat}  \subset  A_{M}  \subset A_{max}.
\end{equation}

An invariant measure $\mu_\infty$ is called an \emph{SRB measure}
with respect to $\mu$ provided that

\begin{equation} \label{eqn:srb}
\int_X \ph d\mu_{\infty} = \lim_{k \rightarrow \infty} \frac
1k\sum_{i=0}^{k-1} \ph(\G^i(x))
\end{equation}
for almost all $x\in X$ and for all continuous functions $\ph\in
C(X)$ (see \cite{P05}). If a good measure is unique and ergodic,
then it is an SRB measure.

The connection between an SRB measure and the $\e-$invisibility
property mentioned in Definition~\ref{def:e-invis} is the following:

\begin{Prop} \label{prop:invissrb}
Consider $X$ and $\G:X\rightarrow X$ as above, and suppose that an
SRB measure $\mu_\infty$ exists. Then an open set $V\subset X$ is
$\e-$invisible if and only if
$$
\mu_\infty(V)\leq \e .
$$
\end{Prop}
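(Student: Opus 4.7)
The plan is to reduce both directions to the SRB identity \eqref{eqn:srb}, which converts time averages of continuous test functions into $\mu_\infty$-integrals along almost every orbit. The target quantity on the dynamical side is
$$
\limsup_{N\to\infty}\frac1N\sum_{i=0}^{N-1}\mathbf{1}_V(F^i(x)),
$$
and the obstacle is that $\mathbf{1}_V$ is not continuous. I would get around this by sandwiching $\mathbf{1}_V$ between sequences of continuous functions on either side.

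For the direction ``$V$ is $\varepsilon$-invisible $\Rightarrow \mu_\infty(V)\le\varepsilon$'', I use that $\mathbf{1}_V$ is lower semicontinuous (since $V$ is open), hence the pointwise supremum of an increasing sequence of continuous functions $\varphi_k\in C(X)$ with $0\le \varphi_k\le\mathbf{1}_V$; by monotone convergence, $\int\varphi_k\,d\mu_\infty\to\mu_\infty(V)$. Since $\varphi_k\le\mathbf{1}_V$,
$$
\int\varphi_k\,d\mu_\infty
=\lim_{N\to\infty}\frac1N\sum_{i=0}^{N-1}\varphi_k(F^i(x))
\le\limsup_{N\to\infty}\frac1N\sum_{i=0}^{N-1}\mathbf{1}_V(F^i(x))\le\varepsilon
$$
for $\mu$-almost every $x$, where the first equality uses \eqref{eqn:srb}. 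Letting $k\to\infty$ gives $\mu_\infty(V)\le\varepsilon$.

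For the converse direction, I would run the same argument from above. Approximate $\mathbf{1}_{\bar V}$ by continuous functions $\psi_k\ge\mathbf{1}_{\bar V}$ with $\psi_k\downarrow\mathbf{1}_{\bar V}$ (possible by upper semicontinuity of $\mathbf{1}_{\bar V}$ and Urysohn's lemma on a metric space). Then for almost every $x$,
$$
\limsup_{N\to\infty}\frac1N\sum_{i=0}^{N-1}\mathbf{1}_V(F^i(x))
\le\lim_{N\to\infty}\frac1N\sum_{i=0}^{N-1}\psi_k(F^i(x))
=\int\psi_k\,d\mu_\infty,
$$
and letting $k\to\infty$ drives the right side down to $\mu_\infty(\bar V)$. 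The main obstacle here is exactly the gap between $\mu_\infty(V)$ and $\mu_\infty(\bar V)$: the argument only yields $\limsup\le\mu_\infty(\bar V)$, so to conclude $\varepsilon$-invisibility from $\mu_\infty(V)\le\varepsilon$ one needs $\mu_\infty(\partial V)=0$. In the setting of Theorem~\ref{thm:mt} the relevant SRB measure is absolutely continuous along fibers and the sets $V=\pi^{-1}(0,\tfrac14)$ of interest have codimension-one smooth boundary, so $\mu_\infty(\partial V)=0$ holds automatically and both inequalities combine into the claimed equivalence.
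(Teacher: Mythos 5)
Your argument follows the same route as the paper's two-line proof: substitute the characteristic function of $V$ into the SRB identity \eqref{eqn:srb} and handle its discontinuity by sandwiching between continuous functions. Your ``only if'' direction is complete. More importantly, you have correctly isolated a genuine gap in the ``if'' direction that the paper's proof silently suppresses: the upper sandwich only yields $\limsup\le\mu_\infty(\bar V)$, and without a hypothesis such as $\mu_\infty(\partial V)=0$ the implication ``$\mu_\infty(V)\le\e$ implies $V$ is $\e$-invisible'' is actually false. For instance, for a North--South skew product one may take $V$ to be a thin one-sided open collar $\{(\om,x):\ga(\om)<x<\ga(\om)+\delta\}$ of the graph $A_{stat}$; then $\mu_\infty(V)=0$, yet every orbit entering $\Sigma^2\times I$ above the graph stays above it (the fiber maps preserve order on $I$) and converges to it, hence eventually remains in $V$. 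A positive-measure set of orbits therefore visits $V$ with frequency $1$.

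However, your proposed repair does not stand as written. The SRB measure of Theorem~\ref{thm:astatsol} is \emph{not} absolutely continuous along the fibers: it is the pull-back of the Bernoulli measure to the graph of $\ga$, so its conditional measure on each fiber is a single atom at $\ga(\om)$. Consequently $\mu_\infty(\partial V)=\mu_\infty\bigl(\pi^{-1}\{0,\tfrac14\}\bigr)=P\bigl(\ga^{-1}\{0\}\bigr)+P\bigl(\ga^{-1}\{\tfrac14\}\bigr)$, and the vanishing of these level-set measures does not follow from the smoothness of $\partial V$; it would need its own argument. A cleaner fix, available in the paper's application, is to use the slack in the frequency estimates: Proposition~\ref{prop:3sol} (and likewise Proposition~\ref{prop:3}) applies verbatim to the slightly larger open set $V'=\pi^{-1}(-\delta,\tfrac14+\delta)$ for small $\delta>0$, since the contradiction there is obtained with a definite margin above $\tfrac14$. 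Your ``only if'' direction then gives $\mu_\infty(V')\le(2n+1)2^{-2n}<\e$ for an \emph{open} set $V'\supset\bar V$, and running your upper sandwich against $V'$ instead of $\bar V$ closes the argument for $V$, both for the model map and for its perturbations.
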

\begin{proof} This proposition immediately follows by letting $\ph$
be the characteristic function of $V$ in \eqref{eqn:srb}. Of course,
the characteristic function is not continuous, but it can be
sandwiched between continuous functions arbitrarily tight.
\end{proof}

The classical definitions above traditionally apply to smooth
manifolds $X$, either closed or compact with boundary, for which the
measure $\mu$ is compatible with the smooth structure (a ``Lebesgue
measure''). In the above, we have extended these definitions to
general metric measure spaces. \\

Let $X=\Sigma^2\times S^1$ and $\pi:X\rightarrow S^1$ be the standard projection. Let $C^1_{p,L}$ denote the space of skew products over
the Bernoulli shift with fiber $S^1$, whose fiber maps and their inverses have Lipschitz constant at most $L$. We will
now state the following analogue of the Main Theorem~\ref{thm:mt}
for such skew products, with a quantitative
estimate on the size of the ball $Q_{p,n}$:

\begin{Thm} \label{thm:mtskew} Consider any $n \ge 100$, and let $\nu=\frac 1n$. Then there exists
a ball $Q_{p,n}$ of radius $\frac 1{4n^2}$ in the space $C^1_{p,L}$
with the following property. Any map $G \in Q_{p,n}$ is structurally
stable and has a statistical attractor $A_{stat}=A_{stat}(G)$ such
that the following hold:

\begin{enumerate}

\item the projection $\pi(A_{stat})\subset S^1$ has the property that
\begin{equation}    \label{eqn:proj1}
\pi(A_{stat}) \subset [-\nu, 1 + \nu],
\end{equation}
\item the set $\pi^{-1}(0, \frac 1 4)$ is $\e-$invisible for $G$ with
\begin{equation}    \label{eqn:invis}
 \e = 2^{-n}.
\end{equation}
\end{enumerate}
\end{Thm}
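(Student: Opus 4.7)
My plan is to first build an explicit step skew product $G_0 \in C^1_{p,L}$ satisfying the conclusion of the theorem with quantitative slack, and then exploit structural stability of $G_0$ to propagate the result to a $C^1$-ball of radius $\tfrac{1}{4n^2}$ around it. I would choose fiber maps $g_0^*, g_1^*: S^1 \to S^1$ so that (i) both are $L$-moderate diffeomorphisms mapping the arc $[-\nu, 1+\nu] \subset S^1$ strictly into itself with a uniform fiber contraction rate $\lambda < 1$, and (ii) they are North--South type maps whose sinks lie in $[1/4, 1+\nu]$, arranged so that the random IFS with equal Bernoulli weights exponentially rarely returns to the arc $V := (0, \tfrac 14)$. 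Concretely, the construction should guarantee that among the $2^n$ length-$n$ words $w \in \{0,1\}^n$, only $O(1)$ of the compositions $g_w^{*} = g_{w_n}^{*} \circ \cdots \circ g_{w_1}^{*}$ send any part of $[-\nu,1+\nu]$ into $V$.

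With such $G_0$, uniform fiber contraction makes $G_0$ a partially hyperbolic skew product admitting a unique SRB measure $\mu_\infty^0$, whose fiber projection $\hat\nu := \pi_* \mu_\infty^0$ is the stationary measure of the IFS. Iterating stationarity $n$ times yields
\begin{equation*}
\hat\nu(V) \;=\; \sum_{w \in \{0,1\}^n} 2^{-n}\,\hat\nu\bigl(g_w^{*-1}(V)\bigr),
\end{equation*}
and since only $O(1)$ of the summands are nonzero, we obtain $\mu_\infty^0(\pi^{-1}V) = \hat\nu(V) \le C\cdot 2^{-n}$. Replacing $n$ by $n + O(1)$ in the construction absorbs the constant $C$ and secures, say, $\mu_\infty^0(\pi^{-1}V) \le \tfrac 14 \cdot 2^{-n}$, leaving room for the perturbation step.

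For any $G$ in the ball $Q_{p,n}$ of radius $\tfrac{1}{4n^2}$ around $G_0$ in $C^1_{p,L}$, uniform fiber contraction persists, so $G$ is partially hyperbolic and structurally stable by standard graph-transform arguments. Hence $G$ admits a unique SRB measure $\mu_\infty^G$ close to $\mu_\infty^0$ in the weak-$*$ topology, and $\pi(A_{stat}(G)) \subset [-\nu, 1+\nu]$ follows from invariance of a slightly enlarged arc under every fiber map $g_\omega$ of $G$. Combining the quantitative slack in the $G_0$-estimate with a stability estimate for $\mu_\infty^G(\pi^{-1}V)$ under perturbations of size $\tfrac{1}{4n^2}$ gives $\mu_\infty^G(\pi^{-1}V) \le 2^{-n}$, and Proposition~\ref{prop:invissrb} then delivers the $\e$-invisibility of $\pi^{-1}V$ with $\e = 2^{-n}$.

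The main obstacle is matching two disparate scales: the required invisibility $\e = 2^{-n}$ is exponentially small, whereas the admissible perturbation radius $\tfrac{1}{4n^2}$ is only polynomially small, so naive weak-$*$ continuity of $\mu_\infty^G$ is insufficient. The key estimate is combinatorial: one must show that for every $G \in Q_{p,n}$ the number of ``bad'' length-$n$ words (those whose fiber composition brings any part of the attractor into $V$) remains $O(1)$. This in turn demands a quantitative transversality condition in the construction of $g_0^*, g_1^*$, namely that each image $g_w^{*}([-\nu, 1+\nu])$ be separated from $\partial V$ by a $C^1$-distance of order at least $n^{-2}$, so that the combinatorial picture is preserved under perturbations of the allowed size.
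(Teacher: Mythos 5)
Your overall architecture---an explicit step product, a combinatorial count of ``bad'' length-$n$ words, and a perturbation step that preserves the count rather than relying on weak-$*$ continuity of the SRB measure---matches the paper's strategy, and you correctly identify the central obstacle (the mismatch between the exponential target $2^{-n}$ and the polynomial radius $\frac 1{4n^2}$). But the construction you describe cannot deliver the combinatorial count you need, and that count is where the entire quantitative content of the theorem lives. If both sinks of $g_0^*,g_1^*$ lie in $[\frac 14,\,1+\nu]$, the arc between the two sinks is forward-invariant under both maps, so the stationary measure is supported there and never charges $V=(0,\frac 14)$: your estimate is vacuous, contradicts your own assertion that the IFS ``returns'' to $V$, and is fragile under perturbation if a sink sits exactly at $\frac 14$. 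To get a nonempty but exponentially rare set of bad words, one sink must lie in $\overline V$ (the paper puts the sink of $f_0$ at $0$) and---this is the key missing idea---the contraction rate toward it must be calibrated to $1-\Theta(\frac 1n)$ (the paper takes $f_0'|_I\equiv 1-\frac{\nu}{2}$). That calibration is exactly what forces at least $n$ consecutive symbols $0$ before an orbit thrown above $\frac 34$ by any occurrence of the symbol $1$ can descend into $V$; the inequality $(1-\frac 1n)^{n-1}(\frac 34-\frac 1n)+\frac 1n>\frac 14$ is the whole point. With any contraction rate bounded away from $1$, a bounded number of consecutive zeros suffices to re-enter $V$, the bad words number $2^{n-O(1)}$, and the visit frequency is of order $1$, not $2^{-n}$.

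Two further gaps. First, your transversality margin of order $n^{-2}$ is too small: per-step perturbations of size $\frac{\nu^2}{4}$ pushed through fiber maps with derivative $1-O(\frac 1n)$ accumulate over $n$ steps to errors of order $\frac 1n$; the paper instead keeps a margin of constant size ($\varphi(n)-\frac 14$, roughly $e^{-1}\cdot\frac 34-\frac 14$) by simply rerunning the inequality with the perturbed bounds $g_\om'|_I\in[1-\nu,1)$. Second, the stationarity identity $\hat\nu(V)=\sum_w 2^{-n}\hat\nu(g_w^{*-1}(V))$ exists only for the step product: a perturbed $G$ is a mild skew product whose fiber maps depend on the whole sequence $\om$, so ``the composition along the word $w$'' is not defined and your key estimate has no meaning for $G\in Q_{p,n}$. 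The paper circumvents this by proving the statement pointwise for every $G$ in the ball (Proposition~\ref{prop:3}): if $G^k(\om,x)\in V$ with $x\in I$ and $k>n$, then $\om_{k-n}\dots\om_{k-1}=0\dots 0$, using only that each fiber map is $\frac{\nu^2}{4}$-close to $f_0$ or $f_1$; ergodicity of the shift then bounds the visit frequency by $2^{-n}$ directly, with no appeal to the SRB measure at all. You would need to recast your combinatorial claim in this orbitwise form to close the argument.
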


\subsection{North-South skew products}   \label{sub:ns}

The skew products for which we will verify Theorem~\ref{thm:mtskew}
will be from the open set of so-called North-South skew products,
defined below.
\begin{Def}    \label{def:12} A skew product $G:\Sigma^2\times
S^1\rightarrow \Sigma^2\times S^1$ is called a North-South skew product
provided that its fiber maps $g_\om$ have the following properties:

\begin{enumerate}
\item Every map $g_\om$ has one attractor and one repeller, both
hyperbolic.

There exist two non-intersecting closed arcs $I,J\subset S^1$ such
that:

\item All the attractors of the maps $g_\om$ lie strictly inside $I$.

\item All the repellers of the maps $g_\om$ lie strictly inside $J$.

\item All the maps $g_\om$ bring $I$ into itself and are contracting on $I$
uniformly in $\om$. Moreover, the maps $g_\om^{-1} |I$ are
expanding.

\item All the inverse maps $g_\om^{-1}$ bring $J$ into itself and are contracting on $J$
uniformly in $\om$. Moreover, the maps $g_\om |J$ are expanding.

\item The maps $g_\om$ depend continuously on $\om$ in the $C^0$ topology.

\end{enumerate}

\end{Def}

\subsection{Maximal attractors of North-South skew products}   \label{sub:mss}

\begin{Thm}\label{thm:astat} Let $X=\Sigma^2\times S^1$ and let $G:X\rightarrow X$ be a \ns over the Bernoulli shift. Then we have
\begin{description}

\item[a)] The statistical attractor of $G$ is the graph of a continuous
function $\ga = \ga_G: \Sigma^2 \to I.$ The projection
$p|A_{stat}:A_{stat}\rightarrow \Sigma^2$ is a bijection. Under this
bijection, $G|{A_{stat}}$ becomes conjugated to the Bernoulli shift
on $\Sigma^2$:

\begin{equation}   \label{conjugacy A-stat}
\begin{CD}
A_{stat} @>{G}>> A_{stat} \\
@V{p}VV @V{p}VV \\
\Sigma^2 @>{\sigma}>> \Sigma^2
\end{CD}
\end{equation}

\item[b)] There exists an SRB measure $\mu_\infty$ on $X$. This measure is
concentrated on $A_{stat}$ and is precisely the pull-back of the
Bernoulli measure $P $ on $\Sigma^2$ under the bijection
$p|A_{stat}:A_{stat}\rightarrow \Sigma^2$.

\end{description}
\end{Thm}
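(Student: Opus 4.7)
\emph{Proof proposal.} The plan is to treat $G$ as a fibered contraction over the hyperbolic base $(\Sigma^2,\s)$ and to recover both statements from the classical graph transform applied to the trapping region $\Sigma^2\times I$.

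\textbf{Step 1 (the graph).} By condition 4 of Definition~\ref{def:12}, each fiber map carries $I$ into itself and contracts it by a uniform factor $\kappa<1$. Hence the sets $G^n(\Sigma^2\times I)$ are nested, and their $\om$-fibers shrink uniformly. For every $\om\in\Sigma^2$ and every $x_0\in I$ the sequence
\[
(g_{\s^{-1}\om}\circ g_{\s^{-2}\om}\circ\cdots\circ g_{\s^{-n}\om})(x_0)
\]
is Cauchy uniformly in $x_0$, so its limit $\ga(\om)$ is well defined and independent of $x_0$. This produces a function $\ga:\Sigma^2\to I$ whose graph equals $\bigcap_{n\ge 0}G^n(\Sigma^2\times I)$.

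\textbf{Step 2 (continuity and conjugacy).} Continuity of $\ga$ follows from the $C^0$-continuous dependence of $g_\om$ on $\om$ (condition 6) combined with the uniform tail bound of Step~1: truncating the infinite composition at level $n$ leaves an error of order $\kappa^n$, while the truncated composition depends continuously on $\s^{-1}\om,\dots,\s^{-n}\om$. The invariance identity $g_\om(\ga(\om))=\ga(\s\om)$ is the very content of $G$-invariance of the graph, so the diagram of the statement commutes and $G|_{\mathrm{graph}(\ga)}$ is topologically conjugate to $\s$ via $p$.

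\textbf{Step 3 (the statistical attractor).} Applying the same construction to $G^{-1}$ on $\Sigma^2\times J$ (condition 5) yields a second continuous graph $\ga^{\ast}:\Sigma^2\to J$ of fiberwise repellers. Because each $g_\om$ has exactly one attractor in $I$ and one repeller in $J$, every point $(\om,x)$ with $x\ne \ga^{\ast}(\om)$ has its forward orbit enter $\Sigma^2\times I$ in finitely many steps and is then contracted onto $\mathrm{graph}(\ga)$ at the uniform rate $\kappa$. Thus the $\om$-limit set of every such point lies in $\mathrm{graph}(\ga)$, and the conjugacy of Step~2 (together with minimality of $\s$ on $\mathrm{supp}(P)=\Sigma^2$) shows $\mathrm{graph}(\ga)$ is minimal as a $G$-invariant closed set carrying the limit statistics. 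This identifies $A_{stat}=\mathrm{graph}(\ga)$ and proves (a).

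\textbf{Step 4 (SRB measure).} Let $\mu_\infty$ be the push-forward of $P$ under $\om\mapsto(\om,\ga(\om))$. Invariance and ergodicity are inherited from $(\s,P)$ through the conjugacy of Step~2. To verify \eqref{eqn:srb}, observe that the exceptional set $\mathrm{graph}(\ga^{\ast})$ meets each fiber in a single point, hence has zero measure with respect to $P\times\mathrm{Leb}$. For any other initial point $(\om,x)$ and any continuous $\ph$, the exponential decay of $d(G^k(\om,x),G^k(\om,\ga(\om)))$ and uniform continuity of $\ph$ imply that the two Birkhoff averages coincide in the limit; Birkhoff's theorem applied to $(\s,P)$ equates this common limit to $\int\ph\,d\mu_\infty$, giving (b). The claim about $\e$-invisibility needed elsewhere will then follow from Proposition~\ref{prop:invissrb}.

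\textbf{Main obstacle.} The subtle point is the uniformity required to turn fiberwise contraction into a global attraction statement: one needs that outside a uniform neighborhood of $\mathrm{graph}(\ga^{\ast})$ every orbit enters $\Sigma^2\times I$ within a number of steps bounded independently of $\om$. This follows from the \ns structure of Definition~\ref{def:12} combined with the compactness of $\Sigma^2$, but it is exactly where conditions 4 and 5 (uniform contraction in $\om$) are essential. A secondary technical issue is the merely $C^0$ hypothesis on $\om\mapsto g_\om$: Hölder dependence would make continuity of $\ga$ automatic, but the exponential tail bound in Step~1 is strong enough to push the argument through at the level of plain continuity.
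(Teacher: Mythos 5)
Your proposal follows essentially the same route as the paper: the backward compositions $g_{\s^{-1}\om}\circ\dots\circ g_{\s^{-n}\om}(I)$ give the graph $\ga$ and its continuity via the uniform tail bound; the inverse dynamics on $\Sigma^2\times J$ produce a second graph whose fibers are single points, so Fubini kills the exceptional set; and the SRB property is reduced to Birkhoff's theorem for $(\s,P)$ after contracting onto the graph. Steps 1, 2 and 4 are correct and match the paper's Propositions~\ref{prop:cont}, \ref{prop:cont2}, Lemma~\ref{lem:Pushing into I} and the proof of part \textbf{b)}.

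The one genuine defect is in Step 3, where you derive $\mathrm{graph}(\ga)\subset A_{stat}$ from ``minimality of $\s$ on $\mathrm{supp}(P)=\Sigma^2$.'' The Bernoulli shift is not minimal on $\Sigma^2$: it has fixed points and many proper closed invariant subsets, any of which pulls back to a proper closed $G$-invariant subset of the graph, so minimality cannot be the reason no such subset can serve as the statistical attractor. What you actually need is that $P$ is ergodic with full support: if $A_{stat}$ corresponded to a proper closed invariant $K\subsetneq\Sigma^2$, then $P(\Sigma^2\setminus K)>0$ and, by Birkhoff, almost every orbit shadows the graph over a base orbit that spends a positive fraction of time away from $K$, contradicting Definition~\ref{def:astat}. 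The paper sidesteps this entirely by first showing that every good measure must equal $(p|_{A^*_{max}})^{-1}_*P$ (using $G_*\mu(K\times S^1)=\mu(K\times S^1)$), so that $A_{min}=\mathrm{supp}\,\mu_\infty=A^*_{max}$, and then invoking the chain $A_{min}\subset A_{stat}\subset A_M\subset A^*_{max}$ from \eqref{eqn:incl3}. Either repair works, but as written your Step 3 rests on a false premise; note also that your Step 4 already contains everything needed for the correct argument, so the logical order should be reversed: establish the SRB measure first, then read off $A_{stat}$ from its support.
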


\begin{proof}
By assumption $4$ of Definition~\ref{def:12}, the map $G$ brings
$\Sigma^2 \times I$ strictly inside itself. We can thus consider the
global maximal attractor of $G|\Sigma^2\times I$:
\begin{equation}   \label{eqn:amax}
A_{max}^* = \bigcap_{k=1}^{\infty} G^k(\Sigma^2\times I).
\end{equation}
We will later prove that
\begin{equation}       \label{eqn:coinc}
A_{max}^* = A_{stat}.
\end{equation}

\begin{Prop}    \label{prop:cont} The attractor $A_{max}^*$ is the graph of a function
$\ga : \Sigma^2 \to I$.
\end{Prop}

\begin{proof}  This follows from assumption $4$ in the
definition of North-South skew products. In more detail, a point
$(\om ,x)$ belongs to $A_{max}^* $ if and only if $(\om ,x)$ belongs
to $G^k(\Sigma^2 \times I)$ for all $k \geq 1 $. This is equivalent
to
\begin{equation} \label{eqn:gamma}
x\in g_{\sigma^{-1}\om }\circ...\circ g_{\sigma^{-k}\om }(I)=:
I_k(\om )
\end{equation}
for all $k\geq 1$. By  assumption $4$, for any fixed $\om $, the
segments $I_k(\om )$ are nested and shrinking as $k \to\infty $.
Hence, in any fiber $\{\om \} \times S^1$, the maximal attractor
$A_{max}^*$ has exactly one point
$$
x(\om ) = \bigcap_{k=1}^{\infty} I_k(\om ).
$$
Define the map $\ga :\Sigma^2 \to I, \ \om \mapsto x(\om )$. By this
definition, $A_{max}^*$ is just the graph of $\ga $.
\end{proof}

\begin{Prop}    \label{prop:cont2} The function $\ga $ defined above
is continuous.
\end{Prop}

\begin{proof}
Consider the notation
$$
g_{k,\om } = g_{\s^{-1}\om } \circ g_{\s^{-2}\om } \circ \dots \circ
g_{\s^{-k}\om }.
$$
By \eqref{eqn:gamma}, we have
$$
\ga (\om ) = \bigcap_{k=0}^\infty g_{k,\om }(I).
$$
Fix a sequence $\om $, fix $\d > 0$ and $m\in \nn$. Let $\om' $ be
so close to $\om $ that :
$$
||g_{\s^{-k}\om } - g_{\s^{-k}\om' }|| \le \delta
$$
for all $k = 1, \dots , m$. Here and in the remainder of this proof,
norms are taken in $C(I)$. Write
$$
\delta_k = ||g_{k,\om } - g_{k,\om' }||.
$$
Then, for $k \le m$,
$$
\delta_k = ||g_{k-1,\om } \circ g_{\s^{-k}\om } - g_{k-1,\om' }
\circ g_{\s^{-k}\om' }|| \le T_1 + T_2,
$$
where
$$
T_1 = ||g_{k-1,\om } \circ g_{\s^{-k}\om } - g_{k-1,\om } \circ
g_{\s^{-k}\om' }||,
$$
$$
T_2 = ||g_{k-1,\om } \circ g_{\s^{-k}\om' } - g_{k-1,\om' } \circ
g_{\s^{-k}\om' }||.
$$
Let $l < 1$ be a common contraction coefficient for all the fiber
maps $g_\om|I$. Then we have
$$
T_1 \le l^{k-1}\delta,
$$
$$
T_2 \le \delta_{k-1}.
$$
The second inequality holds because the fiber maps brings $I$ into
itself and the shift of the argument does not change the $C-$norm.
Therefore, we have
$$
\delta_k\leq \delta_{k-1}+l^{k-1}\delta.
$$
Iterating the above inequality gives us
$$
\delta_m \leq \delta+l\delta+...+l^{m-1}\delta < \frac {\d }{1 - l}.
$$
Therefore, the segments $I_m(\om ), I_m(\om' )$ have length no
greater than $l^m|I|$ and the distance between their corresponding
endpoints is no greater than $\frac {\d}{1-l}$. But this holds for
arbitrarily small $\delta$ and arbitrarily large $m$ when $\om$ and
$\om'$ are close enough. Therefore, \eqref{eqn:gamma} implies that
$\gamma(\om)$ and $\gamma(\om')$ can be made arbitrarily close by
making $\om,\om'$ close enough. This precisely proves the continuity
of $\ga $.
\end{proof}

\subsection{Statistical attractors of \nss}   \label{sub:nss}

Let us now prove \eqref{eqn:coinc}. The proof relies on the
following lemma:
\begin{Lem} \label{lem:Pushing into I}
For almost all $(\om,x)\in \Sigma^2\times S^1$ there exists
$k=k(\om,x) > 0$ such that $G^k(\om,x)\in \Sigma^2\times I$.
\end{Lem}
\begin{proof} On $S^1 \setminus (I \cup J)$ all the fiber maps
$g_\om $ push points away from $J$ and into $I$. Hence, the orbit of
a point $(\om ,x)$ will come to $\Sigma^2 \times I$ if and only if
there exists $k$ \st
$$
G^k(\om ,x) \in \Sigma^2 \times (S^1 \setminus J).
$$
This fails to happen only for elements of the set
$$
S =  \bigcap_{k=0}^\infty G^{-k}(\Sigma^2 \times J).
$$
We will show that the measure of $S$ is zero. Consider the inverse
map
$$
G^{-1}: (\om,x) \mapsto (\sigma^{-1}\om,
g^{-1}_{\sigma^{-1}\om}(x)).
$$
Once again, it is a \ns but the segments $I$ and $J$ now play the
opposite roles: $J$ is contracting and $I$ is expanding. By the
previous section, the maximal attractor $S$ of $G^{-1}|\Sigma^2
\times J$ is the graph of a continuous function
$\gamma^-:\Sigma^2\rightarrow J$. It therefore intersects any fiber
$\{\om\}\times S^1$ at exactly one point. By the Fubini theorem, the
measure of $S$ in $X$ is therefore zero.
\end{proof}

The above lemma shows that the $\om-$limit sets of almost all points
in $X$ belong to $A^*_{max}$. Hence $A_{max}^*$ is the Milnor
attractor of $G$, and thus contains $A_{stat}$. We will now prove
that $A_{max}^*$ is precisely equal to $A_{stat}$.

Consider any good measure $\mu_\infty $ of $G$. For any measurable
set $K\subset \Sigma^2$, we have
$$
G^{-1}(K\times S^1)=\sigma^{-1}(K)\times S^1
$$
and therefore $G_*\mu(K\times S^1)=\mu(\sigma^{-1}(K)\times
S^1)=\mu(K\times S^1)$. Iterating this will give us
$G^k_*\mu(K\times S^1)=\mu(K\times S^1)=P(K)$ for all $k$. By the
definition of good measure this forces
\begin{equation} \label{Formula Mu}
\mu_{\infty}(K\times S^1)=P(K)
\end{equation}
But any good measure is supported on $A_{stat}$, and therefore on
$A_{max}^*$. This and \eqref{Formula Mu} imply that $\mu_{\infty}$
must be the push-forward of $P$ under the isomorphism
$(p|A_{max}^*)^{-1}$. In particular, the support of $\mu_\infty$ is
the whole of $A_{max}^*$.

By the above, the only possible good measure is $\mu_\infty$ given
by \eqref{Formula Mu}. Its support $A_{max}^*$ therefore coincides
with the minimal attractor $A_{min}$. Therefore, by
\eqref{eqn:incl3}, we have that
$$
A_{min}=A_{stat}=A_{max}^*.
$$
This proves statement \textbf{a)} of Theorem \ref{thm:astat}.

Let us now prove statement \textbf{b)} of Theorem \ref{thm:astat}.
We must now show that $\mu_{\infty}=(p|A_{stat})^{-1}_*P$ is an SRB
measure (in particular, our proof will imply that $\mu_{\infty}$ is
a good measure). To this end, we must show that for almost all
$(\om,x)\in X$ and any continuous function $\ph \in C(X)$ we have
\begin{equation} \label{Equation SRB}
\textrm{lim}_{k\rightarrow \infty}\dsp \frac 1k\sum_{i=0}^{k-1}
\ph(G^i(\om,x))= \int \ph d\mu_{\infty}.
\end{equation}
By Lemma~\ref{lem:Pushing into I}, we may restrict attention to
$x\in I$. Then it is easy to note that
$$
\textrm{dist}(G^k(\om,x),G^k(\om,\gamma(\om)))\rightarrow 0
$$
as $k\rightarrow \infty$, uniformly in $\om$ and in $x$. By the
continuity of $\ph$ this implies
$$
\ph(G^k(\om,x))-\ph(G^k(\om,\gamma(\om)))\rightarrow 0
$$
Therefore to prove \eqref{Equation SRB}, it is enough to prove it
for $x=\gamma(\om)$, i.e.
\begin{equation} \label{Equation SRB 2}
\lim_{k\rightarrow \infty}\dsp \frac 1k\sum_{i=0}^{k-1}
\ph(G^i(\om,\gamma(\om)))= \int \ph d\mu_{\infty}
\end{equation}

Since $p:A_{stat}\rightarrow \Sigma^2$ is an isomorphism, the
function $\tilde{\ph}=\ph \circ (p|A_{stat})^{-1}$ is continuous on
$\Sigma^2$. Therefore, \eqref{Equation SRB 2} is equivalent to
$$
\lim_{k\rightarrow \infty}\dsp \frac 1k\sum_{i=0}^{k-1}
\tilde{\ph}(\sigma^i \om)= \int_{\Sigma^2} \tilde{\ph} dP
$$
for almost all $\om$. This statement is just the ergodicity of
$\sigma$, which is a well-known result. We have thus proven that
$\mu_{\infty}$ is an SRB measure, and this concludes the proof of
Theorem~\ref{thm:astat}.
\end{proof}

\subsection{Large $\e-$invisible parts of attractors
for skew products over the Bernoulli shift} \label{sub:large}

In this Section we will complete the proof of
Theorem~\ref{thm:mtskew}. Recall that we
have fixed $n \ge 100$, and let $\nu=\frac 1n$. We shall consider a
particular North-South step skew product $F$, whose fiber maps
$f_0,f_1:S^1\rightarrow S^1$ satisfy the properties listed below:

\begin{enumerate}

\item The maps $f_0,f_1$ each have one attractor, which are $0,1\in \rr/2\zz$
respectively. Suppose further that the arc $I$ in
Definition~\ref{def:12} has the form $I = [-\nu , 1 + \nu ]$ and
that:
\begin{equation}   \label{eqn:contr}
f'_0|I \equiv 1 - \frac {\nu }{2}, \ f'_1|I \equiv \frac 1 4 -  \nu.
\end{equation}

\item The arc $J$ in Definition~\ref{def:12} has the form $J = [-\frac
2 3, -\frac 1 3]$, and the repellers of $f_0, f_1$ are at distance
at least $\nu$ from the endpoints of $J$. We ask that the maps $f_j$
satisfy:
$$
f'_j|J = 1 + \nu .
$$

\item In between the arcs $I$ and $J$, $f_0, f_1$ define a ``one
way'' motion away from $J$ and towards $I$:
$$
f_j(x) \ge x + \frac {\nu^2}{2}  \mbox{, for } x \in [-\frac 1 3,
-\nu ]
$$
$$
f_j(x) \le x -\frac {\nu^2}{2}  \mbox{, for } x\in [1 +\nu, \frac 4
3]
$$

\begin{figure}[ht]
 \begin{center}
  \includegraphics[scale=0.5]{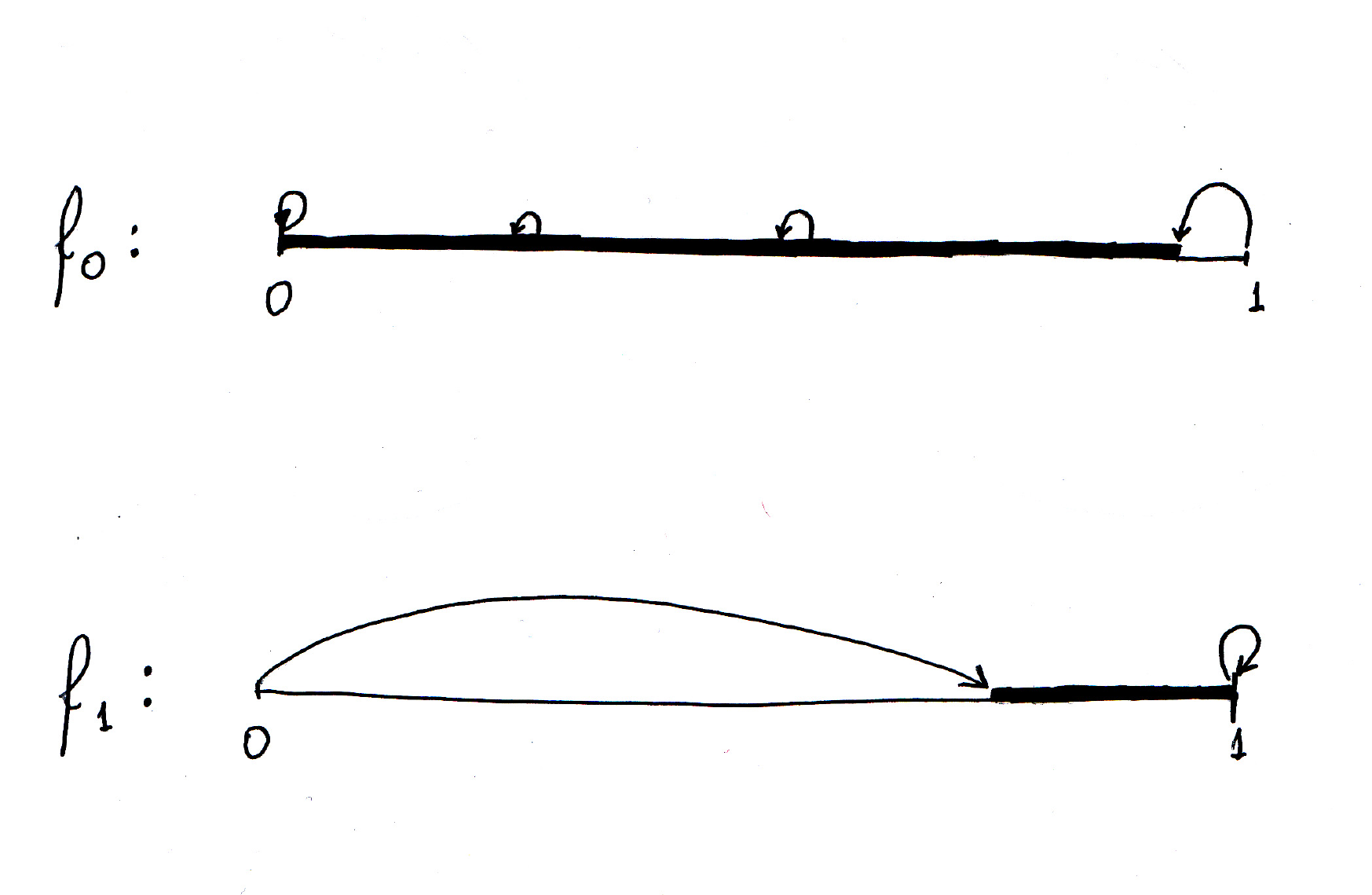}
   \caption{Restrictions of the maps $f_0, f_1$ to the unit segment}
   \label{fig:maps}
  \end{center}
\end{figure}

\end{enumerate}

\begin{Rem} Many of the appearances of $\nu=\frac 1n$ in the above assumptions are due to the fact that we want the qualitative properties of $F$ to survive when we consider perturbations of order $\nu^2$ in the space of skew products (i.e. structural stability). Indeed, it can be shown that this is the case, though we will not get into the technical details.
\end{Rem}

Consider the ball $Q_{p,n}$ of radius $\frac {\nu^2}4$ around $F$ in
the space $C^1_{p,L}$ of skew products over the Bernoulli shift.
This ball consists of skew products $G$ such that
\begin{equation}   \label{eqn:metric}
d(F,G) = \max_{\Sigma^2}d_{C^1}(f^{\pm 1}_\om , g^{\pm 1}_\om ) \leq
\frac {\nu^2}4
\end{equation}
Then any $G \in Q_{p,n}$ has the property that its fiber maps
$g_\om$ satisfy the following:

\begin{enumerate}

\item Any $g_{\om}$ has an attractor $a_\om$ at distance at most $\nu$ from the attractor of $f_{\om_0}$. Moreover, we have
$$
g'_\om|I \in \left[1 - \nu, 1\right) \textrm{, if }\om_0=0,
$$
$$
g'_\om|I \in (0, \frac 14 ] \textrm{, if }\om_0=1.
$$

\item Any $g_{\om}$ has a repeller $a_\om$ at distance at most $\nu$ from the repeller of $f_{\om_0}$. Moreover, we have
$$
g'_\om|J >1 + \frac {\nu}2,
$$

\item Moreover, for any $\om $,
$$
g_\om(x) \ge x+ \frac {\nu^2}{4}  \mbox{ for } x \in [-\frac 1 3,
-\nu ]
$$
\begin{equation} \label{eqn:fone}
g_\om(x) \le x -\frac {\nu^2}{4}  \mbox{ for } x\in [1 +\nu, \frac 4
3]
\end{equation}

\end{enumerate}

All these properties are immediate consequences of the definitions
and of the Implicit
Function Theorem. In particular, it follows that any $G\in Q_{p,n}$ is a North-South skew product. \\

\begin{proof} of Theorem~\ref{thm:mtskew}. The fact that $\pi(A_{stat})\subset I=[-\nu, 1+\nu]$ follows from Theorem~\ref{thm:astat}. Let us now prove that for any $G \in Q_{p,n}$, the set $V =
\pi^{-1}(0, \frac 1 4)$ is $\e-$invisible. We need to check that
almost every point $(\om ,x) \in X$ visits $V$ with frequency no
greater than $\e $. By Lemma~\ref{lem:Pushing into I}, it is enough
to consider $(\om , x) \in \Sigma^2 \times I$.

\begin{Prop}    \label{prop:3} Let $k > n$ and  $(\om , x) \in \Sigma^2 \times
I$ such that $G^k(\om , x) \in V$. Then
$$
(\om_{k-n} \dots \om_{k-1}) = (0 \dots 0).
$$
\end{Prop}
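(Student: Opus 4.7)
The plan is to prove the contrapositive: if some digit $\om_j = 1$ for $j \in \{k-n,\ldots,k-1\}$, I will show that the fiber coordinate $x_k$ of $G^k(\om,x)$ lies outside $(0,\tfrac{1}{4})$. Write $x_s$ for the fiber coordinate of $G^s(\om,x)$. Since every fiber map $g_\om$ of $G \in Q_{p,n}$ inherits property~$4$ of Definition~\ref{def:12}, I first note that $x_s \in I = [-\nu, 1+\nu]$ for every $s \ge 0$. Let $j$ be the \emph{largest} index in $\{k-n,\ldots,k-1\}$ with $\om_j = 1$, so that $\om_{j+1}=\cdots=\om_{k-1}=0$ and $m := k-j$ satisfies $1 \le m \le n$.

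The key dichotomy to exploit is that a type-$1$ digit forces the very next iterate close to $1$, while subsequent type-$0$ digits only contract very slowly back toward $0$. At step $j+1$ the map $g_{\sigma^j\om}$ is of type $1$: its derivative on $I$ is at most $\tfrac14$ and its attractor $a$ satisfies $|a-1|\le\nu$. The Mean Value Theorem around the fixed point $a$ yields $|x_{j+1}-a| \le \tfrac14|x_j-a| \le \tfrac14(1+2\nu)$, whence
$$
x_{j+1} \ge a - \tfrac14(1+2\nu) \ge \tfrac34 - \tfrac{3\nu}{2}.
$$
For the remaining $m-1 \le n-1$ iterations, each map $g_{\sigma^s\om}$ is of type $0$: derivative on $I$ in $[1-\nu,1)$ with attractor $a_s \in [-\nu,\nu]$. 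The MVT applied at $(x_s,a_s)$ gives $x_{s+1}-a_s \ge (1-\nu)(x_s-a_s)$ provided $x_s \ge a_s$, and rearranging with $a_s \ge -\nu$ produces $x_{s+1} \ge (1-\nu)x_s - \nu^2$. A simple induction shows that $x_s$ stays comfortably above $\nu$ throughout (hence above $a_s$), so the bound applies at every step. Iterating from $s=j+1$ up to $s=k$ yields
$$
x_k \ge (1-\nu)^{n-1}\bigl(\tfrac34 - \tfrac{3\nu}{2}\bigr) - \nu^2\sum_{i=0}^{n-2}(1-\nu)^i \ge (1-\nu)^{n-1}\bigl(\tfrac34 - \tfrac{3\nu}{2}\bigr) - \nu.
$$

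The main (and essentially only) obstacle is then a numerical verification that this lower bound strictly exceeds $\tfrac14$ for every $n \ge 100$. The sequence $(1-\tfrac1n)^{n-1}$ decreases monotonically to $1/e$ from above, so the bound tends to $\tfrac{3}{4e} \approx 0.276 > \tfrac14$ as $n\to\infty$; an explicit check at $n=100$ gives roughly $x_k \ge 0.3697 \cdot 0.735 - 0.01 \approx 0.262 > \tfrac14$. Hence $x_k \notin (0,\tfrac14)$, so $G^k(\om,x) \notin V$, which establishes the contrapositive and proves the proposition.
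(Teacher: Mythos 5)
Your proof is correct and follows essentially the same strategy as the paper: locate the last digit $1$ before position $k$, show the corresponding type-$1$ fiber map sends the fiber coordinate up to roughly $\tfrac34$, and then show that the at most $n-1$ subsequent type-$0$ maps (derivative $\ge 1-\nu$ on $I$, attractors within $\nu$ of $0$) cannot drag it below $\tfrac14$. The paper obtains the intermediate bound $>\tfrac34$ by monotonicity from $g_{\sigma^{k-j}\om}(-\nu) > f_1(-\nu)-\tfrac{\nu^2}{4}$ rather than your Mean Value Theorem estimate around the attracting fixed point, but this is a cosmetic difference and both chains of inequalities close for all $n\ge 100$.
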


\begin{proof} Let $j\le k - 1$ be minimal \st $\om_{k-j} = 1$. If such a $j$ does not exist or $j > n$,
then the Proposition is proved (since we assumed $k > n$). Suppose
by contraposition that $j \le n$. Then the digit at position zero of
the sequence $\sigma^{k-j}\om $ is $1$. Thus the fiber map
$g_{\sigma^{k-j}\om }$ is $\frac {\nu^2}4-$close to $f_1$, implying:
$$
\pi (G^{k-j+1}(\om , x))= g_{\sigma^{k-j}\om }(\pi
(G^{k-j}(\om,x)))>g_{\sigma^{k-j}\om }(-\nu) >
$$
\begin{equation}      \label{eqn:threeq}
> f_1(-\nu)-\frac {\nu^2}4 =\frac 34+\frac {3\nu}4+\frac {3\nu^2}4 > \frac 3 4.
\end{equation}
By assumption, all the maps $g_{\sigma^l\om }$ for $k - j < l < k$
have digit zero at the zero position, and are thus $\frac
{\nu^2}4-$close to $f_0$. Then \eqref{eqn:threeq} implies that 
$$
\pi (G^k(\om , x))=g_{\sigma^{k-1} \om}\circ... \circ
g_{\sigma^{k-j+1}\om}(\pi (G^{k-j+1}(\om , x))) >
$$
$$
>{(1 -\nu )}^{j-1}\cdot \left(\frac 3 4 - \nu \right) + \nu  =: \varphi (n),
$$
where
 $$
  \varphi (n) = {\left(1 -\frac 1n\right)}^{n-1}\cdot \left(\frac 3 4 - \frac 1 n \right) + \frac 1 n .
$$
We have: $ \varphi (n) = e^{-1}\left(\frac 3 4 - O\left(\frac 1
n\right) \right) > \frac 1 4 $ for large $n$. More accurate
calculation shows that  $ \varphi (n)  > \frac 1 4 $ for $ n \ge
100.$ The inequality $ \varphi (n)  > \frac 1 4 $ contradicts the
assumption of the Proposition.
\end{proof}

The ergodicity of the Bernoulli shift implies that the subword $(0
\dots 0)$ ($n$ zeroes) is met in almost all sequences $\om $ with
frequency $2^{-n}$. This an Proposition~\ref{prop:3} imply that
almost all orbits visit $V$ with frequency at most $\e=2^{-n}$.
Hence $V$ is $\e $-invisible indeed, and this concludes the proof of
Theorem~\ref{thm:mtskew}.

\end{proof}

\section{Skew products over the solenoid}

In this section we construct a map whose smooth perturbations form
the open set $Q_n$ described in Theorem \ref{thm:mt}.

\subsection{The symbolic dynamics and SRB measure for the solenoid map}

Let $h$ be the solenoid map \eqref{eqn:sol}. Denote by $\Lambda$ the
maximal attractor of this map, which is called the
\emph{Smale-Williams solenoid}.  Let $\Sigma^2_1 \subset \Sigma^2$
be the set of infinite sequences of 0's and 1's without a tail of
$1$'s infinitely to the right (i.e. sequences which have 0's
arbitrarily far to the right). Its metric and measure are inherited
from the space $\Sigma^2$. Consider the fate map
\begin{equation} \label{eqn:semi-conj sol}
\Phi:\Lambda\rightarrow \Sigma^2_1, \
\Phi(b)=(...\om_{-1}\om_0\om_1...),
\end{equation}
where we define $\om_k=0$ if $y(h^k(b))\in [0, \frac 1 2)$ and
$\om_k = 1$ if $y(h^k(b)) \in [\frac 1 2, 1)$. The map $\Phi$ is a
bijection with a continuous inverse. Moreover, it conjugates the map
$h|\Lambda$ with the Bernoulli shift:

\begin{equation}   \label{eqn:conj-sol}
\begin{CD}
\Lambda  @>{h}>> \Lambda \\
@V{\Phi}VV @V{\Phi}VV \\
\Sigma^2_1 @>{\sigma}>> \Sigma^2_1
\end{CD}
\end{equation}

In addition to the fate map $\Phi$, we can define the ``forward fate
map'' $\Phi^+(b)=(\om_0\om_1...)$, with $\om_0,\om_1,...$ described
as above. The map $\Phi^+(b)$ is now defined for all $b$ in the
solid torus $B$, and it only depends on $y(b)$. More generally, if
$h^{-k}(b)$ exists, then we can define
$\Phi_{-k}^+(b)=(\om_{-k}...\om_0\om_1...)$.

It is well known that the SRB measure on $\Lambda$ is the pullback
of the Bernoulli measure on $\Sigma_1^2$ under the fate map:

$$\mu_\L = \Phi^* P. $$

\subsection{Attractors  of \nss  over the solenoid}

Let $X=B\times S^1$, where $B$ is the solid torus. A North-South
skew product over the solenoid will refer to a skew product that
satisfies the properties of Definition~\ref{def:12} with
$(\Sigma^2,\om)$ replaced by $(B,b)$.

\begin{Thm}\label{thm:astatsol} Let $\G:X\rightarrow X$ be a North-South skew product over the solenoid. Then

\begin{description}

\item[a)] The statistical attractor of $\G$ lies inside $\Lambda\times I$, and is the graph of a continuous
map $\ga:\Lambda \to I$. Under the projection homeomorphism
$p:A_{stat}\rightarrow \Lambda$, the restriction $\G|{A_{stat}}$
becomes conjugated to the solenoid map on $\Lambda$:

\begin{equation}
\begin{CD}
A_{stat}  @>{\G}>> A_{stat} \\
@V{p}VV @V{p}VV \\
\Lambda @>{h}>> \Lambda
\end{CD}
\end{equation}

\item[b)] There exists an SRB measure $\mu_\infty$ on $X$. This measure is
concentrated on $A_{stat}$ and is precisely the pull-back of the
Bernoulli measure $P$ on $\Sigma^2_1$ under the isomorphism
$\Phi\circ p:A_{stat}\rightarrow \Sigma^2_1$.

\end{description}
\end{Thm}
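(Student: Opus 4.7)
The plan is to mirror the proof of Theorem~\ref{thm:astat} essentially line by line, with two modifications that reflect the difference between $\sigma:\Sigma^2\to\Sigma^2$ and $h:B\to B$: the base map $h$ is invertible only on $\Lambda=\bigcap_k h^k(B)$, not on all of $B$, and Lebesgue measure on $B$ is not $h$-invariant. Neither turns out to be a serious obstacle, for the reasons sketched below.

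By assumption $4$ of Definition~\ref{def:12} the map $\G$ sends $B\times I$ strictly inside itself, so I would first consider the global maximal attractor
$$
A_{max}^* = \bigcap_{k=1}^\infty \G^k(B\times I).
$$
Since $\bigcap_k h^k(B)=\Lambda$, projecting onto the base gives $A_{max}^*\subset\Lambda\times I$; and for each $b\in\Lambda$ the preimages $h^{-k}(b)\in\Lambda$ are well defined, so the fiber of $A_{max}^*$ over $b$ is the nested intersection of the images $g_{h^{-1}b}\circ\dots\circ g_{h^{-k}b}(I)$, which collapses to a single point $\ga(b)$. The continuity argument of Proposition~\ref{prop:cont2} then carries over verbatim, since it only uses uniform contraction on $I$ together with continuous dependence of $g_b$ on $b$. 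Hence $A_{max}^*$ is the graph of a continuous $\ga:\Lambda\to I$, and the base projection conjugates $\G|A_{max}^*$ with $h|\Lambda$. For the analogue of Lemma~\ref{lem:Pushing into I} I would avoid the previous trick of treating $\G^{-1}$ as a new \ns (which required a genuine inverse on the base) and instead argue fiberwise: uniform expansion of the fiber maps on $J$ forces the set of $x$ over any fixed $b$ whose forward $\G$-orbit stays in $B\times J$ to lie in a nested family of intervals of length $\le|J|\l^{-k}$, hence of zero Lebesgue measure, and Fubini gives $\mu$-measure zero. As in Section~\ref{sub:nss}, this yields $A_{stat}\subset A_{max}^*$.

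The hard part will be the identification of the good measure, which is where the proof genuinely departs from the Bernoulli case. There the identity $G^{-1}(K\times S^1)=\sigma^{-1}(K)\times S^1$ together with $\sigma$-invariance of $P$ pinned down $\mu_\infty(K\times S^1)=P(K)$; here Lebesgue on $B$ is not $h$-invariant, so I would substitute the classical fact that $\frac{1}{k}\sum_{i<k} h^i_*\mathrm{Leb}_B$ converges weakly to a scalar multiple of the Smale-Williams SRB measure $\mu_\Lambda=\Phi^*P$ recalled above. Pushing any good measure $\mu_\infty$ of $\G$ forward under the base projection then forces $(\pi_B)_*\mu_\infty$ to be a scalar multiple of $\mu_\Lambda$, while uniform contraction on $I$ forces the conditional measure on $\{b\}\times S^1$ to equal $\delta_{\ga(b)}$ for $\mu_\Lambda$-almost every $b$. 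After normalising, this identifies the unique good measure as
$$
\mu_\infty = (\mathrm{id}\times\ga)_*\mu_\Lambda = (\Phi\circ p)^*P,
$$
whose support is all of $A_{max}^*$. The inclusions \eqref{eqn:incl3} therefore collapse to $A_{min}=A_{stat}=A_{max}^*$, giving \textbf{a)}. For \textbf{b)}, the fiber-contraction argument reduces the SRB identity \eqref{eqn:srb} for $\ph\in C(X)$ to the corresponding identity for $\t\ph=\ph\circ(\Phi\circ p)^{-1}$ along the Bernoulli shift on $(\Sigma^2_1,P)$, which in turn is just the ergodicity of $\sigma$, exactly as in Section~\ref{sub:nss}.
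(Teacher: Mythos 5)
Your argument is correct, and its overall skeleton (pass to the maximal attractor of $\G|B\times I$, show it is a graph of a continuous $\ga$, pin down the unique good measure, and deduce the SRB property from ergodicity of the shift) is the same as the paper's, which simply declares that the proof of Theorem~\ref{thm:astat} carries over. The genuine divergence is exactly at the one place the paper flags as new, the analogue of Lemma~\ref{lem:Pushing into I}: since $h$ has no inverse on all of $B$, the paper cannot reuse the trick of viewing $\G^{-1}$ as a North-South skew product, so it instead proves that $S=\bigcap_k\G^{-k}(\Lambda\times J)$ is hyperbolic via a cone-field computation (Lemma~\ref{lem:hyp}) and invokes Bowen's theorem to get $\mathrm{mes}\,W^sS=0$. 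You avoid all of that by noting that over each fixed $b\in B$ the set of $x$ whose forward orbit never leaves $B\times J$ is the nested intersection $\bigcap_k g_b^{-1}\circ\dots\circ g_{h^{k-1}b}^{-1}(J)$, which by assumption 5 of Definition~\ref{def:12} shrinks geometrically to at most one point, and then applying Fubini. This is more elementary: it needs neither $C^2$ smoothness nor Bowen's theorem, and it is essentially the computation of Proposition~\ref{prop:cont} run fiberwise for the backward dynamics; the paper's heavier route is not wasted only because Lemma~\ref{lem:hyp} is reused in Section~\ref{sec:pert} for structural stability and Ruelle's theorem. You also make explicit a point the paper elides: $\mathrm{Leb}_B$ is not $h$-invariant, so the Bernoulli-case identity $\mu_\infty(K\times S^1)=P(K)$ must be replaced by the convergence of the Ces\`aro averages $\frac1k\sum_{i<k}h^i_*\mathrm{Leb}_B$ to (a multiple of) $\mu_\Lambda=\Phi^*P$, and correspondingly the SRB identity in part b) needs the pointwise version of this fact (Lebesgue-a.e.\ $b$ has $P$-generic forward itinerary, which follows from ergodicity of the doubling map in $y$); when you write this up, the comparison of $\G^j(b,x)$ with the graph deserves one extra line, since $\ga$ is defined only on $\Lambda$ while a generic $b$ merely approaches $\Lambda$, so the fiber coordinate should be compared with $\ga$ evaluated at the point of $\Lambda$ shadowing $h^jb$.
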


This theorem is proved in the same way as Theorem~\ref{thm:astat}
with a single difference: we need new arguments to prove the
analogue of Lemma~\ref{lem:Pushing into I}. This will be done in
Lemma~\ref{lem:Pushing into I sol} of the next subsection.

\subsection{Hyperbolicity}

\begin{Lem}    \label{lem:hyp}   Let $\G:X\rightarrow X $ be a \ns over the
solenoid. Then the invariant sets
$$
A=\bigcap_{k=0}^\infty \G^k(B\times I), \textrm{ }\textrm{
}S=\bigcap_{k=0}^\infty \G^{-k}(\Lambda\times J)
$$
are hyperbolic.
\end{Lem}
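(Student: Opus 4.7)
The plan is to verify uniform hyperbolicity of both $A$ and $S$ via the classical cone-field criterion, exploiting the lower-triangular block form of $D\G$ inherited from the skew-product structure. I will outline the argument for $A$; the argument for $S$ is entirely symmetric, with the fiber direction moving from the stable to the unstable bundle.

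First I would observe that $A \subset \La \times I$, since the projection onto the base satisfies $\pi_B(A) = \bigcap_{k \ge 0} h^k(B) = \La$, and similarly $S \subset \La \times J$. Hence on both sets one may use the standard hyperbolic splitting $T_b B = E^u_h(b) \oplus E^s_h(b)$ of the Smale--Williams solenoid, where $\dim E^u_h = 1$, $\|Dh|_{E^u_h}\| \ge 2$, and $\|Dh|_{E^s_h}\| \le \la < 0.1$. In base/fiber coordinates, the derivative of $\G$ has the block form
$$
D\G_{(b,x)} = \begin{pmatrix} Dh(b) & 0 \\ \partial_b g_b(x) & \partial_x g_b(x) \end{pmatrix},
$$
with $|\partial_x g_b(x)| \le \mu < 1$ on $I$ by assumption 4 of Definition~\ref{def:12}, and $|\partial_x g_b(x)| \ge \mu^- > 1$ on $J$ by assumption 5.

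For $A$, at each $(b,x) \in A$ I would define the unstable cone
$$
K^u(b,x) = \{(v_b, v_f) : \|v_f\| + \|\pi_{E^s_h} v_b\| \le \alpha \|\pi_{E^u_h} v_b\|\}
$$
and the complementary stable cone $K^s(b,x)$. The triangular form of $D\G$ implies that the image of $(v_b, v_f) \in K^u$ has $E^u_h$-component of norm at least $2\|\pi_{E^u_h} v_b\|$, $E^s_h$-component at most $\la \|\pi_{E^s_h} v_b\|$, and fiber component at most $C\|v_b\| + \mu\|v_f\|$, where $C := \sup \|\partial_b g_b(x)\|$ is finite because $\G$ is $C^1$. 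For $\alpha$ chosen large enough in terms of $C$, $\la$, and $\mu$, this yields $D\G(K^u) \subset K^u$ with a uniform expansion factor strictly greater than $1$ on vectors in $K^u$. The analogous statement for $D\G^{-1}$ on $K^s$ follows from the same computation with indices reversed, using $\la, \mu \in (0,1)$. By the standard cone criterion this furnishes a continuous $D\G$-invariant hyperbolic splitting $TX|_A = E^u_\G \oplus E^s_\G$ of dimensions $1$ and $3$ with uniform rates.

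For $S$, a mirror argument applies: the fiber direction now joins the unstable bundle (since $|\partial_x g_b| \ge \mu^- > 1$ on $J$), so the unstable cone at $(b,x) \in S$ groups $T S^1$ together with $E^u_h$, while the stable cone surrounds $E^s_h$ alone. The cone-invariance computation is carried out for $D\G^{-1}$ in place of $D\G$, producing a hyperbolic splitting with $\dim E^u_\G = 2$ and $\dim E^s_\G = 2$. The main technical point in both cases is the off-diagonal block $\partial_b g_b(x)$, which mixes base motion into the fiber direction; this is absorbed by widening the cones, which is possible precisely because the solenoid's unstable rate $\ge 2$ strictly dominates $\la$, $\mu$, and $1/\mu^-$ (all bounded well away from $2$). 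No estimate beyond the classical cone criterion is needed.
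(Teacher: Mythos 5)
Your overall strategy --- the cone criterion applied to the block-triangular derivative of the skew product, with the fiber direction placed in the stable bundle over $A$ and in the unstable bundle over $S$ --- is the same as the paper's. The gap is in the one sentence where you dispose of the off-diagonal block $\partial_b g_b(x)$: the claim that it ``is absorbed by widening the cones'' does not hold in the standard product metric. Write $u=\|\pi_{E^u_h}v_b\|$, $s=\|\pi_{E^s_h}v_b\|$, $f=\|v_f\|$, so that your unstable cone over $A$ is $f+s\le\alpha u$. The image under $D\G$ has fiber component bounded only by $C(u+s)+\mu f$ with $C=\sup\|\partial_b g_b\|$; taking the extreme cone vector $s=\alpha u$, $f=0$, invariance of the cone requires $C(1+\alpha)+\lambda\alpha\le 2\alpha$, i.e.\ $C\le(2-C-\lambda)\alpha$, which no choice of $\alpha$ satisfies unless $C<2-\lambda$. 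But $C$ is only ``finite because $\G$ is $C^1$,'' and for the almost step skew products the paper actually uses it is well above $2$. Independently, the asserted one-step expansion fails for wide cones: a cone vector with $f=\alpha u$, $s=0$ has norm about $\alpha u$, while its image may have fiber component zero (the terms $\partial_b g_b\,v_b$ and $\partial_x g_b\,v_f$ can cancel), leaving only the base contribution $\approx 2u$; the ratio $2/\sqrt{1+\alpha^2}$ is below $1$ precisely in the regime of large $\alpha$ you need. The sets are still hyperbolic --- the invariant unstable bundle exists as a graph of uniformly bounded slope by a graph-transform argument --- but the classical cone criterion, which demands one-step cone invariance together with one-step expansion, is not verified by the cones as you have set them up, so ``no estimate beyond the classical cone criterion is needed'' is exactly where the proof breaks.

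The missing device is an adapted metric. The paper rescales $\tilde x=\eta^2 x$, $\tilde z=\eta z$ and uses the Euclidean metric in $(\tilde x,y,\tilde z)$; because the Jacobian is block triangular, $d\G$ and $d\G^{-1}$ then become diagonal up to $O(\eta)$, and the cone conditions, being open, follow from the block-diagonal model (its Proposition on block-diagonal operators) for $\eta$ small. Equivalently, you could keep your cones but make them anisotropic, with apertures in the fiber and $E^s_h$ directions scaled by different powers of a small parameter relative to $E^u_h$ --- which is the same rescaling in disguise. With that modification your argument goes through and coincides with the paper's.
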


\begin{Rem} The union $A\cup S$ is the non-wandering set of $\G$.
The set $A$ is a hyperbolic attractor of index 1, while $S$ is a
locally maximal hyperbolic set of index 2. \end{Rem}

This Lemma is a technical result that will be proved shortly. For
now, denote by $W^sS$ the set of all $q \in X$ that attract to $S$
under $\G$:
$$
W^sS =\{ q\in X|d(\G^k(q),S) \to 0 \mbox{ as  } k \to \infty \} .
$$
We claim that set $W^sS$ has measure 0:
\begin{equation} \label{eqn:measzero}
\textrm{mes }W^sS=0.
\end{equation}
This follows from Lemma~\ref{lem:hyp} and Bowen's theorem:
\begin{Thm} [\cite{B75}]  Consider a $C^2$ \diffeo of a compact manifold,
and a hyperbolic invariant set $S$ of this \diffeo which is not a
maximal attractor in its neighborhood. Then the attracting set
$W^sS$ has Lebesgue measure $0$.
\end{Thm}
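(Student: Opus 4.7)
The plan is to follow Bowen's original argument, which reduces the measure estimate on the global stable set to a combinatorial/geometric estimate on an ``unstable trace'' of $S$. Since $W^sS = \bigcup_{n\ge 0} f^{-n}(W^s_{loc}(S))$ and $f$ is a $C^2$ diffeomorphism (hence preserves Lebesgue-null sets), it suffices to show that the local stable set $W^s_{loc}(S)$, defined as the set of points in a small neighborhood $U$ of $S$ whose forward orbit stays in $U$, has Lebesgue measure zero. I would first replace $S$ by the locally maximal hyperbolic invariant set $\tilde S$ in $U$ (obtained by standard cone-field arguments); since $W^s(S)\subset W^s(\tilde S)$ this only strengthens the statement, and the hypothesis that $S$ is not a maximal attractor in its neighborhood passes to $\tilde S$. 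Then apply Smale's spectral decomposition to write $\tilde S$ as a finite union of basic sets, and work one basic set at a time.

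Next, use the local product structure of a basic set $S$: for a suitably chosen $x_0\in S$, a neighborhood $V$ of $x_0$ decomposes (via the stable/unstable holonomies) as $W^s_\e(x_0)\times W^u_\e(x_0)$, and inside this chart $V\cap W^s_{loc}(S)$ is homeomorphic to $W^s_\e(x_0)\times K$, where $K=V\cap W^u_\e(x_0)\cap S$ is the ``unstable trace'' of $S$. Because $f$ is $C^2$, the stable foliation of the hyperbolic set is absolutely continuous (Anosov--Sinai), so by Fubini the Lebesgue measure of $W^s_{loc}(S)$ in the ambient manifold vanishes if and only if the Lebesgue measure of $K$ inside the unstable disk $W^u_\e(x_0)$ vanishes. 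This reduces the theorem to the intrinsic statement: for a basic set $S$ that is not an attractor, the trace $W^u_\e(x_0)\cap S$ has zero Lebesgue measure in $W^u_\e(x_0)$.

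The heart of the argument (and the main obstacle) is to establish that non-attractor trace. The characterization to use is: a basic set $S$ is an attractor iff $W^u_\e(x)\subset S$ for some (hence all) $x\in S$. So when $S$ is not an attractor, $W^u_\e(x_0)\setminus S$ contains a relatively open set. By topological transitivity and expansion of $f$ along $E^u$, iterating a Markov partition $\{R_1,\dots,R_N\}$ of $S$ yields a countable refining family of unstable plaques covering $K$, whose number at stage $n$ grows like $e^{h n}$ with $h$ the topological entropy, while each plaque has unstable volume bounded above by $\|Df^{-n}|_{E^u}\|^u$ on its interior, shrinking exponentially in $n$. The decisive input is the Ruelle--Bowen variational inequality: the topological pressure satisfies $P(S,\,-\log|\det Df|_{E^u}|)\le 0$, with equality if and only if $S$ admits an SRB measure, which for a basic set of a $C^2$ diffeomorphism is equivalent to $S$ being an attractor. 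Hence in our non-attractor case $P(S,\,-\log|\det Df|_{E^u}|)<0$, which by the usual volume bound
\[
\mathrm{Leb}^u(K)\;\le\;\sum_{n\text{-cylinders}} e^{S_n(-\log|\det Df|_{E^u}|)}\;\longrightarrow\;0
\]
gives $\mathrm{Leb}^u(K)=0$, and the theorem follows.

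The step most likely to require real work is the pressure strict inequality $P(S,\phi^u)<0$ when $S$ is not an attractor; this is the SRB/Pesin-theoretic characterization and is precisely what Bowen \cite{B75} establishes via Markov partitions and Gibbs state theory. All other steps (reduction to locally maximal, spectral decomposition, absolute continuity of stable foliations, local product structure) are by now standard pieces of hyperbolic theory.
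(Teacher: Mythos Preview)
The paper does not prove this theorem at all: it is quoted verbatim as Bowen's theorem, with a reference to \cite{B75}, and used as a black box to conclude that $W^sS$ has measure zero. So there is no ``paper's own proof'' to compare against; you have supplied far more than the authors did.

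Your outline is essentially Bowen's own argument: reduce to the local stable set, pass to a locally maximal hyperbolic set and decompose into basic pieces, use $C^2$ absolute continuity of the stable lamination to reduce to the unstable trace, and then invoke the pressure criterion $P(-\log|\det Df|_{E^u}|)<0$ for non-attracting basic sets to get zero unstable Lebesgue measure. One point deserves more care: you assert that ``the hypothesis that $S$ is not a maximal attractor in its neighborhood passes to $\tilde S$''. This is not automatic---in principle the locally maximal enlargement $\tilde S$ could be an attractor even if $S$ is not (imagine $S$ a saddle periodic orbit sitting inside a hyperbolic attractor). The correct way to handle this is to apply the spectral decomposition to $\tilde S$ and observe that $W^s(S)$ is contained in the union of the stable sets of the basic pieces; for any basic piece that \emph{is} an attractor, $S$ cannot intersect it nontrivially without itself being that attractor, so only non-attracting basic pieces contribute, and to those Bowen's pressure argument applies. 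With that adjustment, your sketch is sound.
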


Now we can prove the following analogue of Lemma~\ref{lem:Pushing
into I}:
\begin{Lem} \label{lem:Pushing into I sol}
For almost all $(b,x)\in B\times S^1$, there exists $k=k(b,x)$ such
that $\G^k(b,x)\in B\times I$.
\end{Lem}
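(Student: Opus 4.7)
The plan is to parallel the proof of Lemma~\ref{lem:Pushing into I}, replacing the Bernoulli-case Fubini argument (which gave $\mu(S) = 0$ because $S$ was the graph of a function) by the Bowen measure-zero result \eqref{eqn:measzero}. Writing
$$
T = \{(b,x) \in B \times S^1 : \G^k(b,x) \notin B \times I \text{ for all } k \ge 0\},
$$
it is therefore enough to show $T \subset W^s S$.

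The first step reprises the ``pushing'' observation from the Bernoulli case: the arc $S^1 \setminus J$ is $g_\om$-forward invariant (a consequence of assumption~5 in Definition~\ref{def:12}), and on $S^1 \setminus (I \cup J)$ each map $g_\om$ drives points away from $J$ and toward $I$. Using compactness of $B$ and the $C^0$-continuity of $\om \mapsto g_\om$, I would establish the uniform estimate: there is an integer $N$ such that $\pi(\G^N(b,x)) \in I$ for every $b \in B$ and every $x \in S^1 \setminus J$. Then, if $(b,x) \in T$ had some forward iterate with fiber coordinate outside $J$, a further $N$ iterates would place the orbit inside $B \times I$, contradicting $(b,x) \in T$. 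Hence
$$
T \subset \widetilde S := \bigcap_{k \ge 0} \G^{-k}(B \times J).
$$

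To conclude, I would show $\widetilde S \subset W^s S$. Fix $(b,x) \in \widetilde S$. Because $h^k(b) \to \Lambda$ in $B$ and $\pi(\G^k(b,x)) \in J$ for all $k$, every accumulation point of $\{\G^k(b,x)\}_{k \ge 0}$ lies in $\Lambda \times J$, and hence $\om(b,x) \subset \Lambda \times J$. Since $\om(b,x)$ is forward $\G$-invariant, $\G^j(\om(b,x)) \subset \om(b,x) \subset \Lambda \times J$ for every $j \ge 0$, so $\om(b,x) \subset S$ by the definition of $S$. Compactness of $\om(b,x)$ then gives $d(\G^k(b,x), S) \to 0$, i.e.\ $(b,x) \in W^s S$. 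Combined with \eqref{eqn:measzero}, this yields $\mu(T) = 0$.

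The main obstacle is the uniform pushing estimate. For a single fixed $\om$, iterates of $g_\om$ drive $\overline{S^1 \setminus J}$ into $I$, but in the skew product the fiber map changes at each step, and one must exclude pathological sequences $\om^{(0)}, \om^{(1)}, \dots$ along which some initial point stays outside $I$ indefinitely. The standard fix is a compactness argument on the decreasing family $K_N := \overline{\bigcup\{g_{\om^{(N-1)}}\circ\cdots\circ g_{\om^{(0)}}(\overline{S^1 \setminus J}) : \om^{(i)} \in B\}}$: these compact sets are nested, and their intersection must be contained in $I$, owing to the uniform contraction $g_\om(I) \subset \mathrm{int}(I)$ and the $C^0$-continuity of $\om \mapsto g_\om$ over the compact base $B$.
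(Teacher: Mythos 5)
Your proof is correct and follows essentially the same route as the paper: reduce the exceptional set to $T=\bigcap_{k\ge 0}\G^{-k}(B\times J)$, show $T\subset W^sS$ (the paper simply notes that $B$ is attracted to $\Lambda$ where you use the $\omega$-limit set), and conclude from \eqref{eqn:measzero} via Bowen's theorem. Your additional care with the uniform pushing estimate and the $\omega$-limit argument merely fills in details the paper leaves implicit.
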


\begin{proof} Note that if the orbit of the point $(b,x)$
eventually escaped $B\times J$, it would be pushed toward
$B\times I$, and finally inside $B\times I$. Therefore the
statement of the Lemma fails only for points whose orbit stays
inside $B\times J$ forever, i.e. for points of the set
$$
T=\bigcap_{k=0}^\infty \G^{-k}(B\times J).
$$
But $T\subset W^sS$, because any point whose orbit stays forever in
$B\times J$ will be attracted to $\Lambda\times J$ (since $B$ is
attracted to $\Lambda$), and thus will be attracted to $S$. This and
\eqref{eqn:measzero} imply that $\textrm{mes }T=0$. This concludes
the proof of the Lemma, and with it the proof of
Theorem~\ref{thm:astatsol}.
\end{proof}

All that remains to prove is Lemma~\ref{lem:hyp}. Let us recall the
definition of hyperbolic sets in the form of the cones condition and
then check it for the invariant sets $A$ and $S$. Here we use
\cite{P04} and \cite{KH95}.

For any $q \in X$ and any subspace $E \subset T_qX$, define the cone
with the axes space $E$ and opening $\a$ to be the set
$$
C(q,E,\a ) =\{ v \in T_qX| \tan \angle (v,E) \leq \a \} .
$$

Suppose that $A$ is an invariant set of a \diffeo $f: X \to X$. We
say that $(A,f)$ satisfy the \textit{cones condition} if the
following holds: there exist two values $\a^\pm $, two
continuous families of cones on $A$:
$$
C^+(q) = C(q, E^+, \a^+), \ C^-(q) = C(q, E^-, \a^-), \ q \in A
$$
and two numbers
$$
0 < \lambda < 1 < \mu
$$
\st for any $q \in A$ the following relations and inequalities hold:
\begin{equation}   \label{eqn:incl}
df_qC^+(q) \subset C^+(f(q)), \ df_q^{-1}C^-(q) \subset
C^-(f^{-1}(q)),
\end{equation}

\begin{equation}   \label{eqn:exp}
|df_qv| \ge \mu |v|, \ v \in C^+(q),
\end{equation}
$$
|df_q^{-1}v| \ge \lambda^{-1}|v|, \ v \in C^-(q).
$$
\begin{Def} A compact invariant set $A$ of a \diffeo $f$ that
satisfies the cones condition above is called \emph{hyperbolic}.
\end{Def}

\begin{proof} of Lemma~\ref{lem:hyp}. Recall that the coordinates on $B$ are $(y,z)$, and the coordinate on the fiber $S^1$ is $x$. The cones condition will be checked in a special metric: we will rescale the coordinates $x$ and $z$ and then use the Euclidian metric in the new
coordinates. This trick works because the Jacobian matrix of the
\skpr over the solenoid is block triangular.

Let $\tilde x = \eta^2x, \ \t z = \eta z$ be new coordinates. Let
$ds^2 = d{\t x}^2 + dy^2 + d{\t z}^2$.  Then, for $\eta>0 $ small,
the matrices $d\G $ and $d\G ^{-1}$ will be almost diagonal:
\begin{equation}   \label{eqn:df}
d\G = \begin{pmatrix} 2 & & \\
& \lambda & \\
& & g'_b
\end{pmatrix} + O(\eta ),
\end{equation}

\begin{equation}   \label{eqn:dfm}
d\G^{-1} = \begin{pmatrix} \frac 1 2 & & \\
& \lambda^{-1} & \\
& & \frac {1}{g'_b \circ g_b^{-1}}
\end{pmatrix} + O(\eta ).
\end{equation}

Conditions \eqref{eqn:incl} and inequalities \eqref{eqn:exp} are
open, so they persist under small perturbations of the operators
$d\G , d\G^{-1}$. Therefore, it is sufficient to check them for the
first diagonal terms in \eqref{eqn:df}, \eqref{eqn:dfm}, and then
they will immediately follow for $d\G,d\G^{-1}$ for $\eta$ small
enough.

\begin{Prop}   \label{prop:parhyp}  Consider the following decomposition of a
vector space: $E = E^+ \oplus E^-$. Let $\A: E \to E$ be a block
diagonal operator corresponding to this decomposition:
$$
\A = \begin{pmatrix}
C & \\
& D \end{pmatrix},
$$
with $||C^{-1}|| \le \mu^{-1}, \ ||D|| \le \lambda , \ 0 < \l <1<
\mu $. Then the cone
$$
C^+ = (0, E^+, \a )=\{(v^+,v^-)\in E \textrm{ such that } |v^-|\leq
\alpha |v^+|\}
$$
for small $\a $ satisfies the following analogs of \eqref{eqn:incl}
and \eqref{eqn:exp}:
\begin{equation}    \label{eqn:conesplus}       
\A C^+ \subset C^+,
\end{equation}

\begin{equation}   \label{eqn:conesplus1}   
|\A v| \ge \frac {\mu }{\sqrt{1+\a^2}}|v| \ \forall v \in C^+.
\end{equation}
For $\a $ small enough, the factor $\frac {\mu }{\sqrt{1+\a^2}}$
will be greater than 1.
\end{Prop}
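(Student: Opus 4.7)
The plan is to exploit the block-diagonal structure of $\A$ together with the cone condition in a completely elementary way. Throughout, write any $v \in E$ as $v = v^+ + v^-$ with $v^\pm \in E^\pm$, so that $\A v = Cv^+ + Dv^-$, and recall that $|v|^2 = |v^+|^2 + |v^-|^2$.

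The first preliminary step is to turn the hypothesis $\|C^{-1}\| \le \mu^{-1}$ into the lower bound $|Cv^+| \ge \mu\,|v^+|$ for all $v^+ \in E^+$, by applying $\|C^{-1}\|$ to $Cv^+$. This is the only content hidden in the operator-norm hypothesis, and it is what makes $C$ behave as an expansion.

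For the invariance \eqref{eqn:conesplus}, I would take $v \in C^+$, so $|v^-| \le \alpha |v^+|$, and estimate
$$|Dv^-| \le \lambda |v^-| \le \lambda \alpha |v^+| \le \frac{\lambda \alpha}{\mu}\,|Cv^+|.$$
Since $\lambda < 1 < \mu$, the ratio $\lambda/\mu < 1$, so $|Dv^-| \le \alpha |Cv^+|$, i.e. $\A v \in C^+$. Note that this step works for any $\alpha > 0$; the smallness of $\alpha$ is only needed for the expansion.

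For the expansion estimate \eqref{eqn:conesplus1}, I would use orthogonality of the decomposition:
$$|\A v|^2 = |Cv^+|^2 + |Dv^-|^2 \ge |Cv^+|^2 \ge \mu^2 |v^+|^2,$$
and then use the cone condition in the opposite direction to bound $|v|$ from above in terms of $|v^+|$:
$$|v|^2 = |v^+|^2 + |v^-|^2 \le (1+\alpha^2)|v^+|^2.$$
Combining the two gives $|\A v|^2 \ge \mu^2 |v|^2/(1+\alpha^2)$, which is exactly \eqref{eqn:conesplus1}. The last claim — that the factor $\mu/\sqrt{1+\alpha^2}$ exceeds $1$ for small $\alpha$ — is immediate from $\mu > 1$ by choosing $\alpha^2 < \mu^2 - 1$.

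There is no real obstacle here: the proof is a direct computation. The only thing to keep track of is that the discarded term $|Dv^-|^2$ is what lets us drop from two-dimensional block estimates to the one-dimensional bound $|Cv^+|\ge \mu|v^+|$, and that the price paid — the factor $\sqrt{1+\alpha^2}$ — is controlled by choosing $\alpha$ small.
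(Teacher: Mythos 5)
Your proof is correct and follows essentially the same route as the paper's: the identical chain of inequalities $|Dv^-|\le\lambda|v^-|\le\lambda\alpha|v^+|\le\frac{\lambda\alpha}{\mu}|Cv^+|\le\alpha|Cv^+|$ for cone invariance, and the bounds $|\A v|\ge|Cv^+|\ge\mu|v^+|\ge\frac{\mu}{\sqrt{1+\alpha^2}}|v|$ for expansion. The only addition is that you make explicit the conversion of $\|C^{-1}\|\le\mu^{-1}$ into $|Cv^+|\ge\mu|v^+|$ and the condition $\alpha^2<\mu^2-1$, which the paper leaves implicit.
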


\begin{proof}  The proof is immediate. Let $v = (v^+, v^-)$ be the
decomposition corresponding to $E = E^+ \oplus E^-$. Then for any $v
\in C^+$,
$$
|{(\A v)}^-| = |Dv^-| \le \l |v^-| \le \l \a |v^+| < \frac {\l }{\mu
}\a |Cv^+| \le \a |Cv^+| = \a |{(\A v)}^+|.
$$
This proves \eqref{eqn:conesplus}. On the other hand, for any $v \in
C^+$,
$$
|Av| = |(Cv^+, Dv^-)| \ge |Cv^+| \ge \mu |v^+| \ge \frac {\mu
}{\sqrt{1+\a^2}}|v|.
$$
This proves \eqref{eqn:conesplus1}.
\end{proof}

We will now prove that the invariant set $A$ of Lemma~\ref{lem:hyp}
satisfies the cones condition. Take any $q = (b,x)\in A$.
Consider
$$
E = T_qX = E^+ \oplus E^-, \ E^+ = \rr \frac {\partial }{\partial
y}, \ E^- = \cc \frac {\partial }{\partial z} \oplus \rr \frac
{\partial }{\partial x}.
$$
Define
$$
C: E^+ \to E^+, \ C:=\textrm{ diag }(2);
$$
$$
D:E^-\rightarrow E^-, \ D=\mbox{ diag }(\l , \l, g'_b(x)).
$$

Since $x\in I$, we have $g'_b(x) < 1$. This splitting and these
operators satisfy the assumptions of Proposition~\ref{prop:parhyp}.
This implies statement of Lemma~\ref{lem:hyp} for $d\G $ and $C^+$
on $A$.

Now let us show that the set $S$ satisfies the cones condition.
Take any $q=(b,x)\in S$, and consider
$$
E = T_qX = E^+ \oplus E^-, \ E^+ = \rr \frac {\partial }{\partial y}
+ \rr \frac {\p }{\p x}, \ E^- = \cc \frac {\p }{\p z}.
$$
Take
$$
C:E^+\rightarrow E^+, \ C= \mbox{ diag }(2, g'_b(x)),
$$
$$
D:E^-\rightarrow E^-, \ D= \mbox { diag }(\l, \l).
$$
As $x \subset J$, we have $g'_b(x) > 1$. Hence, this splitting
and these operators satisfy the assumptions of
Proposition~\ref{prop:parhyp} again. This implies
Lemma~\ref{lem:hyp} for $d\G $ and $C^+$ on $S$.

Similar statements for $d\G^{-1}$ and $C^-$ on $A$ and $S$ are
proved in the exact same way. This concludes the proof of
Lemma~\ref{lem:hyp}, and together with it,
Theorem~\ref{thm:astatsol}.
\end{proof}

\subsection{Almost step skew products over the solenoid}

We now construct an ``almost step'' \skpr over the solenoid, whose
attractor has a large invisible part. Naively, a step skew product
on the solenoid would be a diffeomorphism $\F$ as in
\eqref{eqn:smooth1}, where the fiber maps $f_b$ depend on the digit
$\Phi(b)_0$ only. However, if we set $f_b=f_{\Phi(b)_0}$ for
some fixed diffeomorphisms $f_0,f_1:S^1\rightarrow S^1$, the skew
product would be discontinuous at $y(b)\in \{0,\frac 12\}\subset
S^1$. We must fix this discontinuity.

Consider two diffeomorphisms $f_0,f_1:S^1\rightarrow S^1$, and an
isotopy
$$
f_t:S^1\rightarrow S^1, t\in [0,1]
$$
between them. If $f_0,f_1$ are both orientation preserving, then we
can (and always will) take $f_t = (1 - t^2)f_0 + t^2f_1$.
In this section, numbers in $[0,1)$ are written in binary
representation. For $y\in [0,1)$, define
\begin{equation} \label{eqn:deffibermapssol}
f_y :=\left\{
       \begin{array}{ll}
         f_0, & \textrm{ for } y \in [0, 0.011); \\
         f_{8y-3}, & \textrm{ for } y \in [0.011, 0.1); \\
         f_1, & \textrm{ for } y \in [0.1, 0.111); \\
         f_{8-8y}, & \textrm{ for } y \in [0.111, 1).
       \end{array}
     \right.
\end{equation}
The choice of the isotopy $f_t$ above makes this family $C^1$ in
$y$. The \emph{almost step skew product} over the solenoid,
corresponding to the fiber maps $f_0,f_1$, is defined as
\begin{equation} \label{eqn:stepskewsolen}
\F:X\rightarrow X,  \ \F(b,x)=(h(b), f_{y(b)}(x))
\end{equation}
If $f_0$ and $f_1$ satisfy the properties of
Definition~\ref{def:12}, then $\F$ will be a North South skew
product, see Fig.~\ref{fig:skewsol}. Since we cannot visualize the
4-dimensional phase space, we show on this figure the map
$$
\F' : S^1_{\bold y} \times I \to S^1_{\bold y} \times I, \ (y,x)
\mapsto (y, f_y(x)).
$$

\begin{figure}[ht]
 \begin{center}
  \includegraphics[scale=0.5]{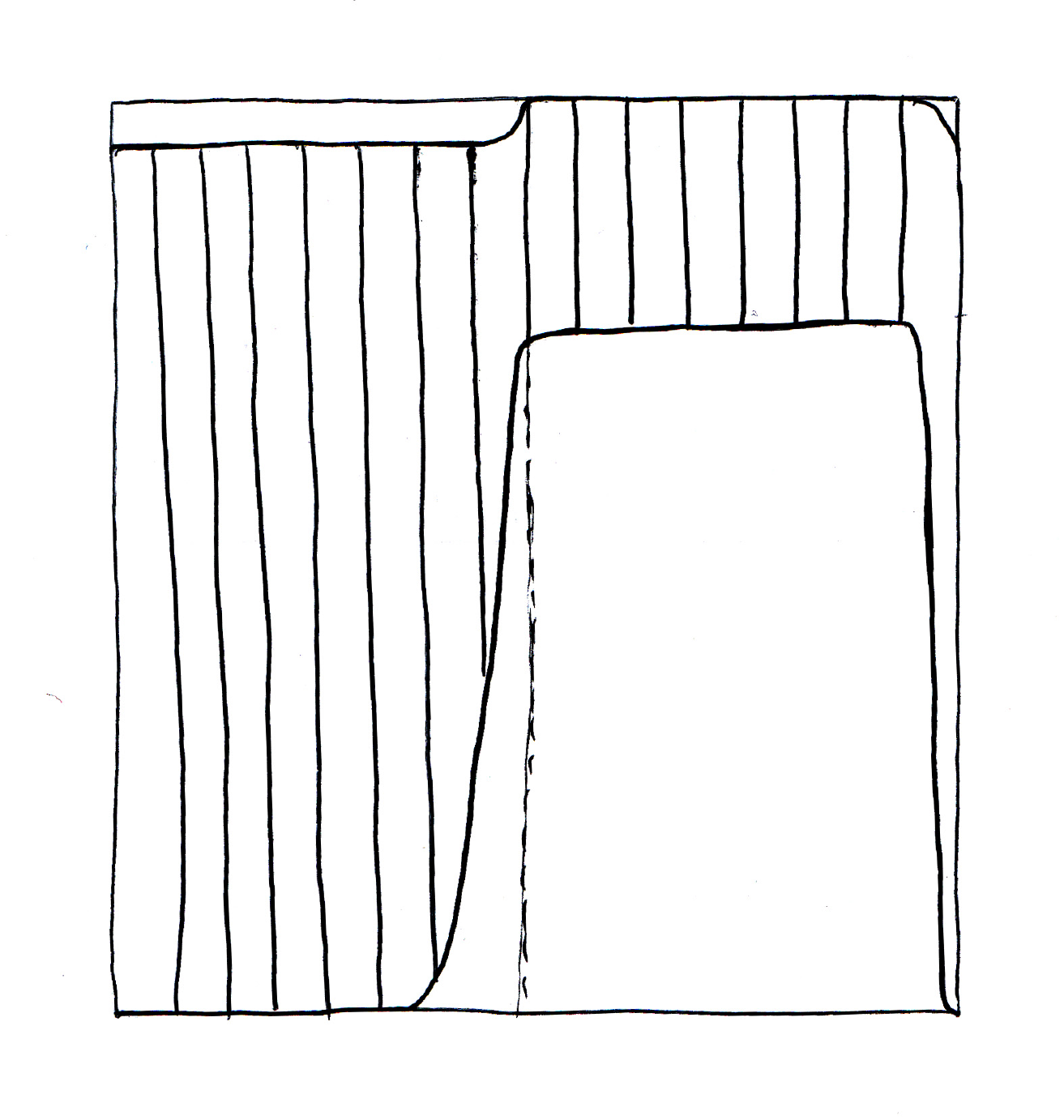}
   \caption{Action of the map $\F' $ and of fiber maps $f_y$ in the family \eqref{eqn:deffibermapssol}.
    Shown are the images of the segment $\{ y\} \times [a_0, a_1]$ under $f_y$}
   \label{fig:skewsol}
  \end{center}
\end{figure}

\begin{Rem}  \label{rem:7} The main feature of almost step \skprs is the following.
Consider a word $w = (\om_0 \dots \om_k)$ that contains no cluster
$11$. Consider a sequence $\om $ with the subword $w$ starting at
the zero position. Let $b = \Phi^{-1}(\om )$. Then
\begin{equation}    \label{eqn:key}
f_{h^{k-1}(b)} \circ \dots \circ f_{b} = f_{\om_{k-1}}\circ
\dots \circ f_{\om_0}.
\end{equation}
Indeed, the binary expansion of $y(h^i(b))$, for any $0 \le i \le k
- 1$, starts with the combination $\om_i\om_{i+1}\om_{i+2}$ which is
different from $011$ or $111$. Hence, by definition, $f_{h^i(b)} =
f_{\om_i}$.
\end{Rem}

Now consider the two diffeomorphisms $f_0,f_1:S^1\rightarrow S^1$ of
Subsection~\ref{sub:large}, and let $\F$ be the almost step skew
product over the solenoid corresponding to these two fiber maps.
Recall that $0$ is the attractor of $f_0$, $1$ is the attractor of
$f_1$, and let $I=[0,1]$. Then the map $f_1\circ f_0$ has a unique
attractor at
$$
a=\frac {\frac 34+\nu }{\frac 34+\nu+\frac {\nu}2\left(\frac 14-\nu
\right)}\in (1-\nu,1)
$$
Let us write $\t I = [0, a]$. The following result establishes the
first statement of Theorem~\ref{thm:mt} for the map $\F$.


\begin{Lem}   \label{lem:10new}
The statistical attractor $A_{stat}$ of $\F$ is a circular arc such
that
$$
\t I \subset \pi(A_{stat}) \subset I.
$$
\end{Lem}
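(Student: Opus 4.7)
The plan is to invoke Theorem~\ref{thm:astatsol}, which says $A_{stat}$ is the graph of a continuous function $\ga:\Lambda\to S^1$. Since $\Lambda$ is compact and connected, $\pi(A_{stat})=\ga(\Lambda)$ is automatically a compact connected subset of $S^1$, i.e.\ a closed arc. So it remains to pin down its endpoints by showing $\ga(\Lambda)\subset I=[0,1]$ and exhibiting two points in $\Lambda$ at which $\ga$ attains the values $0$ and $a$.

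For the upper inclusion $\ga(\Lambda)\subset[0,1]$, I would first check that every fiber map $f_b=f_{y(b)}$ from the interpolating family \eqref{eqn:deffibermapssol} sends $[0,1]$ into itself. This is straightforward: both $f_0$ and $f_1$ preserve $[0,1]$ (their attractors are $0$ and $1$ and their contraction rates on $[0,1]$ are $<1$), so the convex combinations $f_t=(1-t^2)f_0+t^2f_1$ do too. The nested-intersection construction of $\ga$ from the proof of Proposition~\ref{prop:cont} can then be carried out with $[0,1]$ in place of the Lyapunov arc $[-\nu,1+\nu]$, which forces $\ga(\om)\in[0,1]$ for every $\om$.

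For the lower inclusion $[0,a]\subset\pi(A_{stat})$, connectedness reduces the task to producing two points $b_0,b_a\in\Lambda$ with $\ga(b_0)=0$ and $\ga(b_a)=a$. The nested-intersection formula gives $\ga(b)=\lim_{k\to\infty} f_{h^{-1}b}\circ\cdots\circ f_{h^{-k}b}(x_0)$ for any $x_0\in[0,1]$. Remark~\ref{rem:7}, applied to the backward trajectory, says that whenever every triple $\omega_{-j}\omega_{-j+1}\omega_{-j+2}$ avoids the patterns $011$ and $111$, the fiber map at $h^{-j}b$ coincides with $f_{\omega_{-j}}$, so the backward composition collapses to an honest composition of $f_0$'s and $f_1$'s indexed by the backward digits of $\om$.

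To finish, I would plug in two explicit sequences. Taking $\om=\overline{0}$ makes every triple $000$ and the backward composition equal to $f_0^k$, with limit the attractor $0$ of $f_0$; so $\ga(\Phi^{-1}(\overline{0}))=0$. Taking $\om_*$ with $\omega_k=0$ for $k\geq 0$, $\omega_{-k}=1$ for odd $k\geq 1$, and $\omega_{-k}=0$ for even $k\geq 2$ makes every relevant triple one of $100,010,101$, again avoiding the forbidden patterns; moreover $\om_*\in\Sigma^2_1$ since its right tail is all zeros. The backward composition is $f_1\circ f_0\circ f_1\circ f_0\circ\cdots$, whose limit is the attractor $a$ of $f_1\circ f_0$, giving $\ga(\Phi^{-1}(\om_*))=a$. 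The chief subtlety is the backward bookkeeping: the parity of the chosen digits must be lined up with the direction of composition in the nested-intersection formula so that the limit is the fixed point of $f_1\circ f_0$ rather than the strictly smaller $f_0(a)$ (which is the fixed point of $f_0\circ f_1$, obtained with the opposite parity).
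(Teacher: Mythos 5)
Your argument is correct and follows essentially the same route as the paper: the right inclusion comes from the invariance of $[0,1]$ under all the fiber maps together with Theorem~\ref{thm:astatsol}, and the left inclusion comes from connectedness plus the values $0$ and $a$ attained by $\gamma$ at the all-zeros fate and at a fate whose backward digits alternate $\ldots 0101$. The paper packages the second step only cosmetically differently --- it exhibits the fixed point $(b_0,0)$ and the period-two orbit through $(b_1,a)$, with $b_1$ of fate $\ldots 010101\ldots$, and notes that $A_{stat}=\bigcap_k\F^k(B\times I)$ contains all periodic orbits --- but this rests on the same computation you carry out directly, namely that $a$ is the attracting fixed point of $f_1\circ f_0$ and that the relevant triples avoid the patterns $011$ and $111$.
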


\begin{proof} The attractors of all the fiber maps
\eqref{eqn:deffibermapssol} belong to the segment $I$. Hence the
inclusion on the right follows from Theorem~\ref{thm:astatsol}. We
will now prove the inclusion on the left.

The map $\F $ has a fixed point $q_0$ and a periodic point $q_1$ of
period $2$ described as follows. Let $b_0 \in \Lambda $ be the
unique fixed point of the solenoid map, which has fate
$(...000...)$. Let $b_1 \in \L $ be the periodic point of the
solenoid map with fate $(...010101...)$, with $0$ standing at the
zero position. Then the point $q_0=(b_0,0)$ is fixed by $\F$,
while the point $q_1=(b_1,a)$ has period 2. By Theorem~\ref{thm:astatsol},
$$
A_{stat}=\bigcap_{k=0}^\infty \F^k(B\times I) 
$$ 
From this, it follows that $A_{stat}$ contains all periodic points of $\F$, and
thus $\{0,a\} \subset \pi (A_{stat})$. By
Theorem~\ref{thm:astatsol}, $A_{stat}$ is homeomorphic to the
solenoid, which is a connected set. Therefore, $\pi
(A_{stat})\subset I$ is connected as well, which implies that it is
a circular arc containing $\t I=[0,a]$. \end{proof}

\subsection{Invisible parts of attractors for special \skprs over
the solenoid}     \label{sub:64}

In this section we will complete the proof of Theorem~\ref{thm:mt}
for the map $\F$, by establishing statement 2. Recall that $n\geq
100$ is fixed, and that we denote $\nu=\frac 1n$. \begin{Lem}
\label{lem:invisspec}  The set $V =\pi^{-1}(0,\frac 1 4)$ is $\e
$-invisible for the map $\F$, with $\e = 2^{-n}$. \end{Lem}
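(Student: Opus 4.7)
The plan is to mirror the proof of Proposition~\ref{prop:3} (the Bernoulli-shift case) applied to the almost-step skew product $\F$ over the solenoid, and to combine it with the Birkhoff ergodic theorem. First, by Lemma~\ref{lem:Pushing into I sol}, almost every orbit $(b, x)$ eventually enters the forward-invariant region $B \times I$; so it suffices to prove the required frequency bound assuming $(b, x) \in B \times I$, writing $\omega = \Phi(b) \in \Sigma^2_1$.

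The central claim to establish is the solenoid analog of Proposition~\ref{prop:3}: if $k > n$ and $\F^k(b, x) \in V$, then the $n$ consecutive fate digits $(\omega_{k-n}, \ldots, \omega_{k-1})$ of $b$ are all zero. Granting this, the Birkhoff ergodic theorem for the Bernoulli shift on $(\Sigma^2_1, P)$ shows that the density of such $k$ is $2^{-n}$ almost surely, yielding the $\e$-invisibility with $\e = 2^{-n}$.

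To prove the claim, I would argue by contradiction: suppose $\omega_{k-j} = 1$ for some minimal $j \in \{1, \ldots, n\}$, so $\omega_{k-j+1} = \ldots = \omega_{k-1} = 0$. The key structural fact (essentially Remark~\ref{rem:7}) is that the fiber map $f_{y(h^i(b))}$ at step $i$ is governed by the three-digit window $\omega_i\omega_{i+1}\omega_{i+2}$: it coincides with $f_{\omega_i}$ when $\omega_{i+1}\omega_{i+2} \ne 11$, and is an interpolating $f_t$ with $t \in (0, 1]$ otherwise. The forced zero tail ensures that at each step $i \in \{k-j, \ldots, k-2\}$ the window is in $\{100, 101, 000, 001\}$, so these fiber maps compose exactly to $f_0^{j-2} \circ f_1$. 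As in Proposition~\ref{prop:3} (but with the refined contraction constant $1 - \nu/2$ for $f_0$), the image of $I$ under this composition lies in $[(1-\nu/2)^{j-2}(3/4 + O(\nu)),\, 1]$, whose lower endpoint exceeds $(3/4)\, e^{-1/2}(1 - O(1/n)) > 0.44$ for $j \le n$ and $n \ge 100$.

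The main obstacle is the final step $i = k-1$, whose three-digit window $\omega_{k-1}\omega_k\omega_{k+1}$ involves the uncontrolled digits $\omega_k, \omega_{k+1}$. When $\omega_k\omega_{k+1} \ne 11$ this step is just $f_0$ or $f_1$, and the argument closes exactly as in Proposition~\ref{prop:3}. When $\omega_k\omega_{k+1} = 11$ the step is $f_t = (1-t^2)f_0 + t^2 f_1$; because $f_t$ is a convex combination and both $f_0$ and $f_1$ send inputs $\ge 0.44$ to values strictly greater than $1/4$, the subcase $j \ge 2$ still goes through. The genuinely delicate case is $j = 1$ with $\omega_{k-1}\omega_k\omega_{k+1} = 111$, in which no preceding $f_1$ has pushed the value up: one must then analyze $f_t$ directly on $I$, and use that the parameter $t = 1 - 0.\omega_{k+2}\omega_{k+3}\ldots$ depends on otherwise free Bernoulli digits, combined with the SRB-controlled distribution of $\pi(\F^{k-1}(b,x))$ supplied by Theorem~\ref{thm:astatsol}, to absorb the excess contribution within the $2^{-n}$ bound.
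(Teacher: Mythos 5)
Your outline --- reduce to $B\times I$ via Lemma~\ref{lem:Pushing into I sol}, extract a combinatorial necessary condition on the fate digits for a visit to $V$, and finish with the ergodic theorem for the shift --- is indeed the paper's strategy. But the combinatorial claim you propose, namely that $\F^k(b,x)\in V$ forces $\om_{k-n}=\dots=\om_{k-1}=0$, is \emph{false}, and the case you flag as ``genuinely delicate'' is not a technical nuisance but the source of actual counterexamples. Take a fate containing a block of $1$'s of length $M\gg n$, preceded by a $0$. At the transition step the window is $011$ and the interpolation parameter of \eqref{eqn:deffibermapssol} is $0.\om_{p+2}\om_{p+3}\dots\approx 1$, so that step acts essentially as $f_1$ and places the orbit near $\frac34$; but at every subsequent step inside the block the window is $111$ with $y$ exponentially close to $1$, so $f_y=f_{8-8y}$ with $8-8y$ exponentially small, i.e.\ essentially $f_0$. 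These near-$f_0$ maps contract the fibre coordinate toward $0$ at rate $1-\frac{\nu}{2}$ per step, so after roughly $2n\ln 3$ steps the orbit enters $(0,\frac14)=V$ and stays there for a while --- at times $k$ for which $\om_{k-n},\dots,\om_{k-1}$ are all equal to $1$. So the implication you want to prove fails on a positive-measure set, and your proposed repair (invoking the SRB distribution to ``absorb the excess contribution'') is not an argument: no estimate is offered, and no estimate can rescue a false implication stated pointwise in $k$.

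The paper circumvents this by changing the combinatorial target rather than patching the exceptional case. Proposition~\ref{prop:3sol} shows that a visit to $V$ at time $k\ge 2n$ forces the word $(\om_{k-2n}\dots\om_{k-1})$ to contain \emph{no occurrence of the two-digit pattern} $10$, i.e.\ to be of the form $0\dots01\dots1$; there are only $2n+1$ such words of length $2n$, giving frequency at most $(2n+1)2^{-2n}<2^{-n}$. Two observations make this work and require no case analysis of the final step. First, an occurrence of $10$ at positions $k-j,\,k-j+1$ places $y(h^{k-j}(b))$ in $[0.10,0.11)\subset[0.1,0.111)$, so the fibre map there is \emph{exactly} $f_1$ (no interpolation is possible), pushing the orbit above $\frac34$. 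Second, \emph{every} fibre map in the family satisfies $f_y\ge f_0$ pointwise on $[0,1]$, because $f_1\ge f_0$ there and $f_t=(1-t^2)f_0+t^2f_1$; hence the remaining at most $2n-1$ steps, whatever their interpolation parameters, cannot bring the orbit below $(1-\frac{\nu}{2})^{2n-1}\cdot\frac34>\frac14$. Note that my counterexample above is consistent with this: the word there is $1\dots1$, which lies in the admissible set $W$ and is duly charged to the $(2n+1)2^{-2n}$ budget. If you wanted to salvage your own route you would have to quantify the frequency of the long-$1$-block configurations separately, which amounts to a messier version of the paper's word count.
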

\begin{proof} To prove this Lemma, we must show that the orbits of
almost all points $(b,x)\in B\times S^1$ visit $V$ with frequency at
most $\e$. By Lemma~\ref{lem:Pushing into I sol}, we may restrict
attention to $(b,x)\in B\times I$. Let $W$ be the set of finite
words of length $2n$ which do not contain the two-digit sequence
$10$. These words have the form $ 0...01...1.$ The cardinality
of $W$ is clearly $2n+1$.

\begin{Prop}    \label{prop:3sol} Let $k \geq 2n, \ (b , x) \in B
\times I$ and suppose that $\F^k(b , x) \in V$. If $\om=\Phi^+(b)$,
then $$ (\om_{k-2n}...\om_{k-1}) \in W $$ \end{Prop}
\begin{proof}
Suppose by contraposition that the conclusion of the Proposition
fails. Then let $j\leq 2n$ be minimal such that
$\om_{k-j}\om_{k-j+1}=10$. By the definition of $\F$, the fiber map
$f_{h^{k-j}b }$ coincides with $f_1$. This implies
that:
$$
\pi (\F^{k-j+1}(b , x))= f_1(\pi(\F^{k-j}(b,x)))>f_1(0)>\frac 34.
$$
Observe that for any $x\in [0,1]$ and $t\in [0,1]$, we have
$$
f_t(x)=t^2f_1(x)+(1-t^2)f_0(x) \geq f_0(x)=\left(1-\frac
{\nu}2\right) x.
$$
Then we have that
$$
\pi (\F^k(b , x))=f_{h^{k-1} b}\circ... \circ f_{h^{k-j+1}b}(\pi
(\F^{k-j+1}(b , x))) \geq
$$
$$
\geq {\left(1 -\frac {\nu}2\right)}^{j-1}\cdot \frac 3 4 \ge
{\left(1 -\frac 1{2n} \right)}^{2n-1}\cdot \frac 3 4 > \frac 1 4.
$$
The above inequality contradicts the assumption that $\F^k(b,x)\in
V$.
\end{proof}

The ergodicity of the Bernoulli shift implies that subwords in $W$
are met in almost all forward sequences $\om=(\om_0\om_1\om_2...)$
with frequency $2^{-2n}$. But almost all sequences $\om$ correspond
under $\Phi^+$ to almost all $b\in B$. Thus we conclude that, for
almost all $b\in B$, subwords in $W$ are met in $\Phi^+(b)$ with
frequency at most $(2n+1)\cdot 2^{-2n}<2^{-n}=\e$. This and
Proposition~\ref{prop:3sol} imply that almost all orbits visit $V$
with frequency at most $\e$, hence $V$ is $\e-$invisible.
\end{proof}

\section{Perturbations}    \label{sec:pert}

Here we complete the proof of our main result. By Lemmas~\ref{lem:10new} and \ref{lem:invisspec}, we have already proved the conclusion of Theorem~\ref{thm:mt} for the map $\F$ itself. Now we will prove the Theorem for small perturbations of it. \\

\begin{proof} of Theorem~\ref{thm:mt}. We will let $Q_n$ be a small ball in $D_L(X)$ around the almost step skew product $\F$. Thus we have to prove statements 1 and 2 of Theorem~\ref{thm:mt} for any $\G$ which is close enough to $\F$.

Let $I^+=[-\nu, 1+\nu]$. Consider first the maximal attractor of $\G
| B\times I^+$:
$$
A^*_{max }(\G) = \bigcap_{k=0}^\infty \G^k(B \times I^+)
$$
This attractor is connected because $B \times I$ is connected. It
contains all the complete orbits of $\G $, and in particular it
contains fixed points and periodic orbits.

Let $q_0(\F )$ and $q_1(\F )$ be the fixed and periodic points of
$\F$ defined in the proof of Lemma~\ref{lem:10new}. They are
hyperbolic, and thus persist under small perturbations. Hence, the
map $\G $ has a fixed point $q_0(\G )$ and a periodic point $q_1(\G
)$ close to $q_0(\F )$ and $q_1(\F )$, respectively. Moreover, for
$\G $ sufficiently close to $\F $ we will have
$$
\pi (q_0(\G)) \in (-\nu, \nu) , \textrm{ }\textrm{ } \pi (q_1(\G ))
\in (1-\nu, 1+\nu) .
$$
Since $q_0(\G), q_1(\G)\in \pi(A^*_{max}(\G))$ and $A^*_{max}(\G)$
is connected, it follows that $A^*_{max}(\G)$ is a circular arc such
that
\begin{equation} \label{eqn:Amax}
[\nu,1-\nu]\subset \pi(A^*_{max}(\G))\subset [-\nu, 1+\nu].
\end{equation}
By the structural stability of the hyperbolic attractors, $A^*_{max
}(\F)$ is hyperbolic. Since $A^*_{max }(\F) = \Ast (\F)$, the
theorem due to Gorodetski \cite{G96} gives
$$
A^*_{max }(\G) = \Ast (\G).
$$
Hence, \eqref{eqn:Amax} proves conclusion 1 of Theorem~\ref{thm:mt}.

As for conclusion 2, let $\mu_\infty(\F)$ denote the SRB measure for
$\F$ (which is described in Theorem~\ref{thm:astatsol}). By
Lemma~\ref{lem:invisspec} and  Proposition~\ref{prop:invissrb}, it
follows that
$$
\mu_\infty(\F)\left(\pi^{-1}\left(0,\frac 14\right)\right)\leq \e
$$
In fact, by the proof of Lemma~\ref{lem:invisspec} we can even put
$(2n+1)2^{-2n}$ in the right hand side. The Ruelle theorem on the
differentiability of the SRB measure \cite{R97} implies that any
small perturbation $\G$ of $\F$ has an SRB measure $\mu_\infty(\G)$,
and that this measure depends differentiably on $\G$. In particular,
it follows that for $\G$ close enough to $\F$ we will still have
$$
\mu_\infty(\G)\left(\pi^{-1}\left(0,\frac 14\right)\right)\leq \e
$$
By applying Proposition~\ref{prop:invissrb} again, it follows that
$\pi^{-1}(0, \frac 14)$ is $\e-$invisible for $\G$.
\end{proof}

\subsection{Acknowledgments}

The authors are grateful to A. Bufetov, A. Gorodetski, V. Kaloshin,
M. Liubich, J. Milnor, C. Pugh, M. Shub, W. Thurston, B. Weiss for
fruitful comments and discussions. We are also grateful to I. Schurov, who wrote the programs for numeric experiments in which the
existence of $\e-$invisible parts of attractors was first observed. We would like to thank S. Filip and N. Kamburov for their technical help. The second author would like to thank the Math in Moscow program organized by the Independent
University of Moscow for providing a wonderfully stimulating cultural and mathematical experience, which resulted in the writing of this paper.

\end{document}